\documentclass[a4paper,11pt]{article}
\usepackage{mathtools}
\usepackage{amsfonts,amssymb,amsmath,amsthm,latexsym}
\usepackage{bbm}
\usepackage{ascmac,fancybox,framed}
\usepackage{enumerate}
\usepackage{listings}
\usepackage{xcolor}
\usepackage{colortbl,color}
\usepackage{float}
\usepackage{graphicx,graphics}
\usepackage{hyperref}
\usepackage[backend=biber, style=numeric, sorting=nyt, giveninits=true]{biblatex}
\allowdisplaybreaks[1]
\mathtoolsset{showonlyrefs}
\newtheorem{Thm}{Theorem}[section]
\newtheorem{Def}[Thm]{Definition}
\newtheorem{Lem}[Thm]{Lemma}
\newtheorem{Prop}[Thm]{Proposition}
\newtheorem{Cor}[Thm]{Corollary}
\newtheorem{Rem}[Thm]{Remark}
\newtheorem{Assump}[Thm]{Assumption}

\begin{document}

% Main title of the paper
\title{An Explicit Representation of the Dominant Eigenstructure for Positive Operators on Banach Lattices} 
\author{Yuki Chino\thanks{Department of Applied Mathematics, 
National Yang Ming Chiao Tung University, Taiwan. 
\url{y.chino@math.nctu.edu.tw}},\\
Kensaku Kinjo\thanks{Academic Support Center, Kogakuin University, Japan.
\url{ft41140@ns.kogakuin.ac.jp}},\\
Ryo Oizumi\thanks{International Relations, National Institute of Population and Social Security Research, Japan. 
\url{ooizumi-ryou@ipss.go.jp}}}
\date{\empty}
\maketitle
% Here goes the abstract
\abstract{
The Riesz projection and the corresponding eigenfunction of a positive operator satisfying the Doeblin condition are explicitly constructed using the partial Bell polynomials. While classical Fredholm theory requires stringent summability conditions, such as the operator being in a Schatten class to ensure the convergence of Fredholm minors, our approach utilizes the local algebraic structure induced by the Doeblin condition. We define a scalar function $D(\lambda)$ whose derivative $D'(\lambda_0)$ at the dominant eigenvalue $\lambda_0$ naturally provides the normalization constant for the projection. Consequently, an explicit functional representation of the eigenfunction is obtained as a limit of a weighted ratio of the operator's kernel, bypassing the need to solve transcendental characteristic equations.
}

% Keywords
%\keywords{Doeblin-type minorization, Banach lattice, Bell polynomials, eigenfunction}
%\pacs[MSC Classification]{47B65, 47A10, 47N60, 46B42}

%%%%%%%%%%
\section{Introduction} \label{s:intro}

The theory of positive operators is a field that has attracted the interest of a lot of researchers. In finite dimensions, structures such as eigenvalues and eigenfunctions of the operator can be obtained by solving the characteristic equation; however, due to algebraic constraints, it is not easy to determine their exact values and expressions. It is well known that Perron--Frobenius theorem provides the most familiar example, guaranteeing the existence of simple dominant eigenvalues and positive eigenvectors for irreducible, non-periodic, non-negative matrices. In infinite dimensions, the Krein--Rutman theorem is known as a generalization of Perron--Frobenius theorem; in particular, the eigenvalue and eigenfunction structure of trace class and Hilbert--Schmidt-type operators can be analyzed by considering the determinant defined as a series. However, these classical approaches heavily rely on the Schatten class properties of the operator, which require the decay of singular values to be sufficiently fast for the Fredholm determinant to be well-defined. In many practical applications, such as those involving general Markov operators or non-smooth kernels, these summability conditions are often too restrictive or difficult to verify. Furthermore, even when the determinant exists, relating its zeros back to the explicit construction of spectral projections remains a non-trivial challenge. In particular, when the decay of eigenvalues is slow (e.g. as seen in certain logarithmic kernels), classical theory necessitates sophisticated regularization procedures such as Carleman--Fredholm subdeterminants, which often obsecure the direct algebraic connection between the operator iterates and its eigenfunctions.

To be more precise, for trace class integral kernel $K$, we consider the following positive operator:
\begin{equation}
    (T f)(x) = \int K(x,y) \, f(y) \; dy.
\end{equation}
Then the resolvent $(\lambda I - T)^{-1}$ yields
\begin{equation}
    [(\lambda I - T)^{-1} g](x) = \frac{1}{\lambda} g(x) + \frac{1}{\lambda^2} \int R(x,y;\lambda) \, g(y) \; dy
\end{equation}
and
\begin{equation}
    R(x,y;\lambda) = \frac{D_F(x,y;\lambda)}{D_F(\lambda)}.
\end{equation}
In the above expression $D_F(\lambda) = \det (I - \frac{1}{\lambda}T)$ is the Fredholm determinant and the Fredholm minor $D_F(x,y;\lambda)$ is expressed as a specific series. The eigenvalue is characterized by a $\lambda_0$ satisfying $D_F(\lambda_0) = 0$, which exactly corresponds to solving the characteristic equation in finite dimensions. At simple eigenvalue $\lambda_0$, the Fredholm minor can be written by eigenfunction and its adjoint: $D_F(x,y;\lambda_0) = C \phi(x) \psi(y)$ with some constant. Thus the eigenfunction $\phi(x)$ has the explicit expression $D_F(x,y_0;\lambda_0)$, where $y_0$ such that $\psi(y_0) \neq 0$. However, the convergence of the specific series determining the Fredholm minor and determinant highly relies on the decay speed of singular values. The trace class assumption guarantees the absolute convergence of the series. 

In this paper, we overcome these limitations by shifting the focus from the global summability of singular values to the local algebraic structure induced by the Doeblin condition. Instead of treating the spectral problem as a static transcendental equation $D_F(\lambda) = 0$, we propose a constructive and dynamic framework. By employing the partial Bell polynomials, we construct a scalar function $D(\lambda)$ that characterizes the spectral gap and the Riesz projection without requiring the operator to belong to any specific Schatten class. This approach provides not only the existence of the dominant eigenvalue but also an explicit, computable representation of the corresponding eigenfunction as a limit of algebraic combinations of the operator kernel.

Our first contribution on this paper is a determinant-free spectral characterization that bypasses the structural constraints. Without assuming compactness, trace class properties, or nuclearity, we derive a factorization of the resolvent via a rank-one inversion formula. Isolated eigenvalues are characterized as zeros of a scalar analytic function--one might identify as the generalized Birman--Schwinger-type function in quantum theory even though it is defined under Hilbert--Schmidt--and the associated spectral projections appear as residues. This scalar reduction remains valid even in the absence of compact embeddings, offering a significant generalization of traditional Fredholm-type results. In the theory of stability for nonlinear traveling waves or fluid, we also sometimes consider the Evans function to characterize the eigenvalue as a zero of $D(\lambda)$.

We also present a novel, resolvent-centric framework to characterize the dominant spectral behavior of positive operators subject to a rank-one Doeblin-type condition. Positive operators on Banach lattices play a fundamental role in spectral theory. Classical results, such as the Perron--Frobenius theorem and the Krein--Rutman theorem, establish the existence and the dominance of a positive eigenvalue under strong positivity or compactness assumptions \cite{P1907,F1912,KR1948,Lotz1985}. A common structural hypothesis underlying these results is a rank-one lower bound, or Doeblin-type minorization, which ensures positivity improvement and irreducibility.

Another contribution is a kernel-level resolvent factorization, which explicitly captures how the rank-one contribution propagates through the operator iterates. By recursively subtracting the rank-one component, we introduce corrected kernels that yield a Neumann-type expansion of the resolvent with explicit rank-one corrections. This representation allows both resolvent kernels and eigenfunctions to be written explicitly in terms of the original integral kernel. By identifying the spectral projection as a rank-one residue, we provide an explicit algebraic template for asymptotic analysis, applicable to a wide range of integral projection models (IPMs).

%%%%%
\subsection{Structural foundations}

%%%%%
\paragraph{Banach lattice and order continuity}
Let $(X,\mathcal{F},\mu)$ be a $\sigma$-finite measure space, and let $E$ be a Banach function space over $(X,\mathcal{F},\mu)$ satisfying Fatou property and order-continuous norm. We denote by $E'$ the Banach dual of $E$ and by $E^\times$ the associate space (K\"othe dual) of $E$, defined by
\begin{equation}
    E^\times := \Bigl\{ g \colon \int_X |f \, g| \; d\mu < \infty \; \text{ for all } f \in E \Bigr\}, \qquad \|g\|_{E^\times} := \sup_{\|f\|_E \leq 1} \int_X |f g| \, d\mu.
\end{equation}
Each $g \in E^\times$ induces a continuous linear functional on $E$ via
\begin{equation}
    \Phi_g(f) := \int_X f(x) \, g(x) \; d\mu(x), \qquad f \in E.
\end{equation}
Since we assume that $E$ has the Fatou property and an order-continuous norm, standard results in the theory of Banach function spaces imply that every continuous linear functional on $E$ arises in this way, and the map $g \mapsto \Phi_g$ yields an isometric isomorphism
\begin{equation}
    E' \cong E^\times.
\end{equation}
The assumptions: Fatou property and order-continuous norm ensure the identification $E' \simeq E^\times$ and exclude pathologies such as the failure of order continuity in $L^\infty$. We refer to \cite{BS1988} and \cite{MN1991} for the duality theory of Banach function spaces.

Restricting to the positive cone, we obtain
\begin{equation}
    E'_+ = \left\{ \Phi_g \colon \Phi_g(f) = \int_X f(x) g(x) \; d\mu(x), \quad g \in E^\times_+ \right\}.
\end{equation}
and hence any positive functional $\Phi \in E'_+$ can be written as
\begin{equation}
    \Phi[f] = \int_X f(x) g(x) \; d\mu(x), \qquad g \in E^\times_+
\end{equation}
The identification $E' \cong E^\times$ reduces the abstract functional $\Phi$ to an integral with the concrete weight function $g(y)$. This is the key that elevates the characteristic function $D(\lambda)$ from mere \emph{existence} to a \emph{concrete computational object}.

We consider a measurable kernel $K \colon X \times X \to [0,\infty)$ inducing the positive operator
\begin{equation}
    (Tf)(x) = \int_X K(x,y) \, f(y) \; d\mu(y) \qquad f \in E.
\end{equation}
We assume throughout that $T$ is well-defined and bounded on $E$.

%%%%%
\paragraph{Doeblin-type minorization}
The central structural pillar of our analysis is the following Doeblin-type rank-one minorization condition.

\begin{Assump}[Doeblin-type minorization] \label{assump:doeblin}
    \rm There exist $\alpha > 0$, a function $u_0 \in E_+$, and a nonnegative measurable function $g$ such that
    \begin{equation} \label{eq:doeblin_kernel}
        K(x,y) \geq \alpha u_0(x) g(y) \qquad \forall x,y \in X.
    \end{equation}
    We denote by $\Phi \in E'_+$ a strictly positive functional, i.e.
    \begin{equation}
        \Phi[f] > 0 \qquad \forall f\in E_+\setminus\{0\},
    \end{equation}
    where $g(y) > 0$ $\mu$-a.e. ensures that $\Phi$ is strictly positive.
\end{Assump}

We associate to $g \in E^\times$ and the functional
\begin{equation}
   \Phi[f] := \int_X f(y) g(y) \; d \mu(y). 
\end{equation}
The minorization condition is classical in Markov and Perron--Frobenius theory and provides the structural foundation for the analysis that follows.

%%%%%
\subsection{Main result}
We first show, under a rank-one Doeblin minorization, that the operator $T$ can be separated into rank-one and the remainder.

\begin{Prop} \label{prop:quasi-compact}
    Under Assumption~\ref{assump:doeblin}, the operator admits the decomposition
    \begin{equation}
        T = \alpha P_0 + R,
    \end{equation}
    where $P_0$ is rank-one and $R \geq 0$.
\end{Prop}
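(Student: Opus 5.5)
The natural choice is to let $P_0$ be the rank-one operator built directly from the minorizing pair in Assumption~\ref{assump:doeblin}, namely
\begin{equation}
    P_0 f := \Phi[f]\, u_0 = \Bigl(\int_X f(y)\, g(y)\, d\mu(y)\Bigr) u_0, \qquad f \in E,
\end{equation}
and then to set $R := T - \alpha P_0$. With these choices the identity $T = \alpha P_0 + R$ holds by definition. What remains is to check three things: $P_0$ is a well-defined bounded operator on $E$, it has rank one, and $R$ is a positive operator.

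For the first point, we use that $g \in E^\times_+$, which the paper associates with the functional $\Phi \in E'_+$. By the duality $E' \cong E^\times$ recalled above, $\Phi$ is a bounded linear functional with $|\Phi[f]| \le \|g\|_{E^\times}\|f\|_E$. Since $u_0 \in E_+$, this gives $\|P_0 f\|_E \le \|g\|_{E^\times}\|u_0\|_E\|f\|_E$. The range of $P_0$ is contained in $\operatorname{span}\{u_0\}$. It equals this span, so the rank is exactly one, because $u_0 \neq 0$ and $\Phi$ is strictly positive (take any $f \in E_+\setminus\{0\}$). We may assume $u_0 \neq 0$, since otherwise the minorization is vacuous. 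Also, $P_0 \ge 0$ because $u_0 \ge 0$ and $g \ge 0$.

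For positivity of $R$, observe that $R$ is again an integral operator, with kernel
\begin{equation}
    K_R(x,y) := K(x,y) - \alpha u_0(x) g(y).
\end{equation}
By \eqref{eq:doeblin_kernel}, $K_R \ge 0$ pointwise. Hence for $f \in E_+$ we get $(Rf)(x) = \int_X K_R(x,y) f(y)\, d\mu(y) \ge 0$ $\mu$-a.e., so $R \ge 0$. Boundedness of $R$ on $E$ follows because $R$ is the difference of the bounded operators $T$ and $\alpha P_0$. Alternatively, $0 \le Rf \le Tf$ for $f \ge 0$, together with the lattice norm, gives $\|R\| \le \|T\|$.

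The only step needing care, and the main obstacle, is the splitting of the integral $\int K f = \int K_R f + \alpha u_0 \int g f$. This splitting must be legitimate and must not involve $\infty - \infty$. For $f \in E_+$ it is immediate from Tonelli, since both pieces are nonnegative and finite: the first integral is finite because $Tf$ is well defined, and the second because $g \in E^\times$. For general $f$, we would decompose $f = f^+ - f^-$ and use linearity. This is where the hypothesis $g \in E^\times$, rather than mere measurability of $g$, is genuinely used.
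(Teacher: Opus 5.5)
Your proof is correct and follows the paper's route. You take $P_0 = u_0 \otimes \Phi$, set $R := T - \alpha P_0$, and obtain $R \geq 0$ by integrating the pointwise minorization $K(x,y) \geq \alpha u_0(x) g(y)$ against $f \in E_+$, which is exactly the operator inequality $Tf \geq \alpha P_0 f$ the paper uses. Your extra checks (boundedness of $P_0$, and rank exactly one via $u_0 \neq 0$ and strict positivity of $\Phi$) supply details the paper leaves implicit. One small point: $g \in E^\times$ is not really an independent hypothesis. For $f \in E_+$ you have $\alpha\,\Phi[f]\,u_0 \leq Tf \in E$, and this already forces $\int_X fg\,d\mu < \infty$ once $u_0 \neq 0$.
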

\noindent This decomposition effectively identifies $T$ as a quasi-compact operator on the Banach lattice. The rank-one operator $P_0$ represents the dominant channel of the dynamics, while $R$ encapsulates the essential spectrum and transient components. Unlike the classical perturbation theory of Kato~\cite{K1966}, our method does not treat $R$ as asmall perturbation but rather as a structural remainder that can be handled through the inversion formula without global trace-class assumptions.

Next we show, under a rank-one Doeblin minorization, that the isolated spectrum of a positive operator $T$ is completely characterized by the zeros of an explicit scalar analytic function $D(\lambda)$, which is a direct consequence of Sherman--Morrison formula. 

\begin{Prop} \label{prop:resolvent_decomposition}
    Let $\lambda \in \rho(R)$ and assume
    \begin{equation}
        1 - \alpha \, \Phi[R_\lambda u_0] \neq 0
    \end{equation} 
    Then the resolvent of $T$ is given by the explicit formula
    \begin{equation} \label{eq:sm-resolvent}
        (\lambda I - T)^{-1} \, = \, R_\lambda + \frac{\alpha \, (R_\lambda u_0) \otimes (\Phi R_\lambda)}{1 - \alpha \, \Phi[R_\lambda u_0]}.
    \end{equation}
\end{Prop}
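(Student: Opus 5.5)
The plan is to verify the Sherman--Morrison formula directly, using the decomposition of Proposition~\ref{prop:quasi-compact}. First fix the meaning of the objects. The rank-one part is $P_0 = u_0 \otimes \Phi$, that is $P_0 f = \Phi[f]\,u_0$, and $R = T - \alpha P_0 \geq 0$. Here $\Phi$ is induced by the weight $g \in E^\times_+$ from Assumption~\ref{assump:doeblin}, so that $\alpha u_0(x) g(y)$ is exactly the kernel of $\alpha P_0$. We write $R_\lambda := (\lambda I - R)^{-1}$, which is a bounded operator on $E$ since $\lambda \in \rho(R)$. The notation $(R_\lambda u_0)\otimes(\Phi R_\lambda)$ denotes the rank-one operator $f \mapsto \Phi[R_\lambda f]\, R_\lambda u_0$. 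It is bounded because $\Phi \in E'$ and $R_\lambda$ is bounded.

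The core step is the factorization
\begin{equation}
\lambda I - T = (\lambda I - R) - \alpha\, u_0\otimes\Phi = (\lambda I - R)\bigl(I - \alpha\, (R_\lambda u_0)\otimes \Phi\bigr).
\end{equation}
The first factor is invertible by hypothesis. For the second factor, write $A := \alpha\,(R_\lambda u_0)\otimes\Phi$ and set $c := \alpha\,\Phi[R_\lambda u_0]$. Then $A^2 f = \alpha\Phi[f]\,\alpha\Phi[R_\lambda u_0]\,R_\lambda u_0 = c\,Af$, so $A^2 = cA$. When $c \neq 1$, this identity lets us check directly that
\begin{equation}
\Bigl(I + \tfrac{1}{1-c}A\Bigr)(I - A) = I - A + \tfrac{1}{1-c}(A - cA) = I,
\end{equation}
and the same computation works with the factors in the reverse order. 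Hence $I - A$ has the two-sided bounded inverse $I + (1-c)^{-1}A$.

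Composing the two inverses gives
\begin{equation}
(\lambda I - T)^{-1} = \bigl(I + (1-c)^{-1}A\bigr) R_\lambda = R_\lambda + \frac{\alpha\,(R_\lambda u_0)\otimes(\Phi R_\lambda)}{1-c},
\end{equation}
because $A R_\lambda f = \alpha\,\Phi[R_\lambda f]\,R_\lambda u_0$. This is exactly \eqref{eq:sm-resolvent}. The same argument shows that $\lambda \in \rho(T)$.

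There is no substantial obstacle; the argument is pure algebra. The one point needing care is bookkeeping: one must confirm that $P_0$ really is $u_0 \otimes \Phi$ with the same $\Phi$ that appears in the statement. Once the weight $g$ is identified as an element of $E^\times$, so that $\Phi \in E'$ via the isomorphism $E' \cong E^\times$, boundedness of every operator in the argument follows.
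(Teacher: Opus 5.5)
Your proof is correct and follows the paper's route: factor $\lambda I - T = (\lambda I - R)\bigl(I - \alpha (R_\lambda u_0)\otimes\Phi\bigr)$, invert the second factor by the Sherman--Morrison identity, and compose with $R_\lambda$. You also check that $I + (1-c)^{-1}A$ is a two-sided inverse (the paper's lemma verifies only one side), and you work with $\lambda\in\rho(R)$ rather than the paper's misprinted $\lambda\in\rho(T)$; both are small but welcome tightenings.
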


\noindent In the end, we obtain the explicit representation of eigenfunction by analyzing the function $D(\lambda) = 1 - \alpha \, \Phi[R_\lambda u_0]$.

\begin{Thm} \label{thm:eigenfunction}
    The eigenfunction of $T$ is
    \begin{equation}
        w = \frac{\alpha}{D'(\lambda_0)} \sum_{n=0}^\infty \lambda_0^{-n} \, \Gamma_n,
    \end{equation}
    where $\lambda_0$ is the zero of $D(\lambda)$ and $\Gamma_{n+1} = R \, \Gamma_n$ with $\Gamma_0 = u_0$. 
\end{Thm}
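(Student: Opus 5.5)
The plan is to read off the eigenfunction from the residue of the Sherman--Morrison formula in Proposition~\ref{prop:resolvent_decomposition} at the zero $\lambda_0$ of $D(\lambda)=1-\alpha\,\Phi[R_\lambda u_0]$. We work with $\lambda_0$ real and strictly larger than the spectral radius $r(R)$, so that $\lambda_0\in\rho(R)$. There $R_\lambda=(\lambda I-R)^{-1}$ is given by the Neumann series $\sum_{n\ge0}\lambda^{-n-1}R^n$, and hence
\[
R_{\lambda_0}u_0=\sum_{n=0}^\infty \lambda_0^{-n-1}\Gamma_n .
\]
With this, the claimed $w$ equals $\frac{\alpha\lambda_0}{D'(\lambda_0)}R_{\lambda_0}u_0$, which differs from $R_{\lambda_0}u_0$ only by a scalar. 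The whole proof therefore has two parts: showing that $R_{\lambda_0}u_0$ is an eigenvector, and justifying the normalization.

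\emph{Step 1 (eigenvector).} Set $v=R_{\lambda_0}u_0$. By Proposition~\ref{prop:quasi-compact}, $T=\alpha P_0+R$ with $P_0f=\Phi[f]\,u_0$. Then
\[
(\lambda_0 I-T)v=(\lambda_0I-R)v-\alpha\,\Phi[v]\,u_0=u_0\bigl(1-\alpha\,\Phi[R_{\lambda_0}u_0]\bigr)=u_0\,D(\lambda_0)=0 .
\]
So $Tv=\lambda_0 v$. Moreover $v\ge u_0/\lambda_0\ne0$, and $v\in E_+$ because every term of the Neumann series is positive. This is a direct computation.

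\emph{Step 2 (residue and normalization).} Next I would justify the constant $\alpha/D'(\lambda_0)$ by identifying it through the Riesz projection. Because $\lambda\mapsto R_\lambda$ is analytic near $\lambda_0$, $D$ is analytic there, with
\[
D'(\lambda)=\alpha\,\Phi[R_\lambda^2u_0]=\alpha\sum_{n\ge0}(n+1)\lambda^{-n-2}\Phi[\Gamma_n]>0 .
\]
Hence $\lambda_0$ is a simple zero of $D$. Taking the residue of \eqref{eq:sm-resolvent} at $\lambda_0$ gives
\[
P=\frac{\alpha}{D'(\lambda_0)}\,(R_{\lambda_0}u_0)\otimes(\Phi R_{\lambda_0}),
\]
which is rank one with range spanned by $v$. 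The normalization is then fixed by applying $P$ to $u_0$, or equivalently by pairing with the adjoint eigenfunctional $\Phi R_{\lambda_0}$. The factor $\lambda_0$ coming from the index shift $\lambda_0^{-n-1}$ versus $\lambda_0^{-n}$ is absorbed into this convention. I would state this convention explicitly, i.e.\ the choice for which $P u_0$ or the pairing matches the displayed $w$.

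\emph{Main obstacle.} The delicate point is the existence of the zero with $\lambda_0>r(R)$, together with convergence of the series at $\lambda_0$. For $\lambda>r(R)$, the function $\lambda\mapsto\alpha\Phi[R_\lambda u_0]$ is continuous and strictly decreasing, and tends to $0$ as $\lambda\to\infty$. So a zero of $D$ exists exactly when $\alpha\Phi[R_\lambda u_0]$ exceeds $1$ as $\lambda\downarrow r(R)$. I would first try to derive this from the positivity of $T$: the zero is $r(T)$ whenever $r(T)>r(R)$, which is what the quasi-compactness reading of Proposition~\ref{prop:quasi-compact} is meant to provide. Otherwise I would add it as a standing hypothesis on $\lambda_0$, since the theorem implicitly takes a zero $\lambda_0\in\rho(R)$ at which the Neumann series converges.
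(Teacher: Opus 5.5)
Your proposal follows the paper's proof. Both use the residue of the Sherman--Morrison formula at $\lambda_0$, the check $(\lambda_0 I-T)R_{\lambda_0}u_0=D(\lambda_0)\,u_0=0$, and the Neumann expansion of $R_{\lambda_0}$ under $\lambda_0>r(R)$; your Step~1 already proves the statement, because the displayed $w$ is a positive multiple of $R_{\lambda_0}u_0$.

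Both of your caveats are correct and should be stated plainly rather than hedged.

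\emph{Normalization.} The residue vector is $\frac{\alpha}{D'(\lambda_0)}\sum_{n\ge0}\lambda_0^{-(n+1)}\Gamma_n$, so the displayed $w$ is exactly $\lambda_0$ times it. The paper's own proof makes this index slip when expanding $(\lambda_0 I-R)^{-1}$, so there is no convention to appeal to.

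\emph{Existence of $\lambda_0>r(R)$.} The paper's Lemma~\ref{lem:spectral_radius} obtains this only by asserting $\alpha\Phi[R_\lambda u_0]\to\infty$ as $\lambda\downarrow r(R)$. That claim fails, e.g.\ for $R=\mathrm{diag}(1-1/k)$ on $\ell^1$ with $u_0(k)=2^{-k}$, $g\equiv 1$ and $\alpha<1/2$: there $\alpha\Phi[R_\lambda u_0]<2\alpha<1$ for all $\lambda>1=r(R)$. So keeping it as a hypothesis is the right call.
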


\noindent While Birman--Schwinger-type characterizations are known in existence proofs of $\lambda_0$ \cite{H2007,GM2025}, to the best of our knowledge, the algebraic simplicity of the dominant pole in non-compact Banach lattice settings has not been established via this analytic zero structure without assuming the existence of the determinant.

\smallskip

Unlike the classical Fredholm theory, our approach provides a constructive bridge between the iteration of the operator and its spectral components. By identifying the expansion coefficients with the partial Bell polynomials, we encapsulate the complex recurrence of the resolvent into a tractable algebraic framework.

\begin{Thm}
    Let $T$ be a bounded positive operator possible to decompose into rank-one $P$ and the remainder $R$. For $\Gamma_n = R \, \Gamma_{n-1} = (T - P) \Gamma_n$, we have the following expression 
    \begin{equation}
        \Gamma_n = K^{(n)} - \sum_{\ell=0}^{n-1} (-1)^\ell \sum_{k=0}^{n-\ell-1} K^{(n-k-\ell-1)} \; B_{\ell+1,k+\ell+1}(b_1,b_2,\cdots),
    \end{equation}
    where $b_j := \Phi_\xi(K^{(j)}u_0)$ and $B_{p,q}(b_1,b_2,\cdots)$ are the partial Bell polynomials with parameter $p,q$.
\end{Thm}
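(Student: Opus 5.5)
We interpret the objects as follows: $P = \alpha\, u_0 \otimes \Phi$, so $Pf = \alpha\, u_0\, \Phi[f]$, and $R = T - P$. We read $K^{(n)}$ as $T^n u_0$, i.e.\ the $n$-fold iterated kernel integrated against $u_0$. Then $\Gamma_n = R^n u_0$, and $b_j = \Phi[T^j u_0]$ are scalars.

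The plan is to prove the identity at the level of generating functions and then extract coefficients. Everything will be done in formal power series in an auxiliary variable $z$, where all manipulations are purely algebraic. Alternatively, one can work for $|z| < 1/\|T\|$ with $\|P\|$ small enough, so that all Neumann series converge. Define
\[
G(z) := \sum_{n\ge 0} z^n \Gamma_n = (I - zR)^{-1} u_0, \qquad A(z) := \sum_{n\ge0} z^n K^{(n)} = (I - zT)^{-1}u_0, \qquad \beta(z) := \sum_{j\ge0} z^j b_j = \Phi[A(z)].
\]

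The first key step is a Sherman--Morrison computation, in the spirit of Proposition~\ref{prop:resolvent_decomposition} but applied in the reverse direction. Since $I - zR = (I - zT) + zP$ and $zP$ is rank one, we get
\[
(I - zR)^{-1} u_0 = A(z) - \frac{z\alpha\, A(z)\, \Phi[A(z)]}{1 + z\alpha\,\Phi[A(z)]} = \frac{A(z)}{1 + z\alpha\, \beta(z)}.
\]
This can be checked directly by applying $(I - zT) + zP$ to the right-hand side. Thus $G(z) = A(z)\,\bigl(1 + h(z)\bigr)^{-1}$ with $h(z) = \alpha \sum_{j\ge1} b_{j-1} z^j$.

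The second step expands $(1+h)^{-1} = \sum_{m\ge0} (-1)^m h^m$. Each power $h^m$ is identified through the defining generating function of the partial Bell polynomials,
\[
\frac{1}{m!}\Bigl(\sum_{j\ge1} x_j \frac{z^j}{j!}\Bigr)^m = \sum_{N\ge m} B_{N,m}(x_1,x_2,\dots)\frac{z^N}{N!}.
\]
We apply this with $x_j = j!\,\alpha\, b_{j-1}$, or with the suitably rescaled arguments the paper intends in its convention for $B_{p,q}(b_1,b_2,\dots)$. Multiplying the Cauchy product $A(z)(1+h(z))^{-1}$ and reading off the coefficient of $z^n$ gives
\[
\Gamma_n = K^{(n)} + \sum_{m\ge1}(-1)^m \sum_{N} c_{N,m}\, K^{(n-N)} ,
\]
where $c_{N,m}$ is the coefficient of $z^N$ in $h^m$. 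Reindexing with $m = \ell+1$ and $N = k+\ell+1$ (so that $N \ge m$ and $n - N = n-k-\ell-1 \ge 0$) produces exactly the double sum of the statement, with ranges $0 \le \ell \le n-1$ and $0 \le k \le n-\ell-1$. The overall minus sign is accounted for by $(-1)^{\ell+1} = -(-1)^\ell$.

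The main obstacle is purely bookkeeping: matching normalizations. This involves the factors $m!/N!$ and the powers $\alpha^m$, the index shift between $b_{j-1}$ and $b_j$, and the order of the indices in $B_{\ell+1,k+\ell+1}$, which the statement writes as (number of blocks, total degree). All of these must be absorbed into the paper's convention for $B_{p,q}$ and the arguments $b_j$. I would fix this convention explicitly, either by rescaling $b_j \mapsto j!\,\alpha\, b_{j-1}$ or by using the ordinary (non-exponential) Bell polynomials. I would then verify the formula for $n=1,2$ by direct expansion of $(T-P)^n u_0$ as a sanity check. For example, at $n=1$ we get $\Gamma_1 = K^{(1)} - \alpha b_0 K^{(0)}$. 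Once the convention is pinned down, the identity follows, since the formal coefficient identity requires no further analytic input.
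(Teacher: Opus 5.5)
Your argument is correct, and it takes a different route from the paper.

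The paper works at fixed $n$. It expands $(T-P)^n$ into words $T^{i_0}PT^{i_1}P\cdots PT^{i_m}$ applied to the seed. Each occurrence of $P$ then contributes the scalar $\Phi[T^{j}u_0]$ for the block $T^{j}$ to its right, and the paper sums over compositions of the remaining exponent. These composition sums are what the paper calls $B_{p,q}$. Its definition is the ordinary, compositional partial Bell polynomial with the number of blocks first: $B_{p,q}(x_1,x_2,\dots)=\sum_{i_1+\cdots+i_p=q,\ i_r\ge 1}\prod_r x_{i_r}$.

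You instead treat all $n$ at once: Sherman--Morrison gives $G(z)=A(z)/(1+z\alpha\beta(z))$, and the coefficients come from the geometric series in $h$. The paper's route is finite and elementary. Yours exposes why the formula has this shape at all: the seed is the range vector $u_0$ of $P$, so only the iterates $T^m u_0$ and scalars survive. With the seed $\Gamma_0=K$ of Section~\ref{s:BS_expansion}, $y$-dependent coefficients $\Phi_\xi(K^{(j+1)}(\cdot,y))$ would appear instead. Your route also gives more: at $z=1/\lambda$ your identity reads $R_\lambda u_0=D(\lambda)\,(\lambda I-T)^{-1}u_0$ for large $|\lambda|$, i.e.\ $1+z\alpha\beta(z)=1/D(1/z)$. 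That links the Bell expansion directly to $D$ and to Theorem~\ref{thm:eigenfunction}.

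As for the bookkeeping you left open, you can pin it down. With the paper's compositional convention one gets exactly $[z^N]\,h^m=B_{m,N}(\alpha b_0,\alpha b_1,\alpha b_2,\dots)$, so no exponential generating function or $j!$ rescaling is needed. Your own $n=1$ check gives $\Gamma_1=K^{(1)}-\alpha b_0K^{(0)}$, whereas the printed formula gives $K^{(1)}-b_1K^{(0)}$. So replacing the arguments $(b_1,b_2,\dots)$ by $(\alpha b_0,\alpha b_1,\dots)$ is a genuine correction to the statement, not a matter of convention. The paper's sketch silently drops both the powers of $\alpha$ and this index shift.
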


\noindent This theorem leads to one of our most significant contributions: an explicit functional representation of the eigenfunction, which bypasses the need to solve transcendental characteristic equations and offers a direct pathway from the operator kernel to its steady-state structure. This representation uncovers the genealogical feedback mechanism where the history of observations $b_n$ is systematically reorganized via partial Bell polynomials to reconstruct the steady-state profile.

%%%%%
\subsection{Discussion}
In the classical Fredholm theory, Hilbert--Schmidt space or trace class properties has been assumed so that Fredholm determinant is well-defined. While the present analysis is formulated on Banach lattices with order-continuous norms, the resolvent-based structure suggests that parts of the theory extend to more general $L^p$-$L^q$ frameworks ($p\in[1,\infty)$) and different underlying measures. The functional-analytic framework adopted in this paper is not tied to a specific choice of function space. As shown in Appendix~A, the resolvent factorization and the associated rank-one structure are invariant under natural changes of measure and remain stable across a broad class of Banach function spaces with order-continuous norms. Beyond this level of robustness, further extensions to more general $L^p$-$L^q$ settings or to spaces without order-continuous norms would require a reformulation of the duality and positivity arguments.

At a most concrete level, one may consider kernel-level recursions based on point-evaluation subtraction, which we will consider in Appendix~C, such as
\begin{equation}
    \begin{aligned}
        &\Gamma_{0}(x,y) = K(x,y),\\
        &\Gamma_{n+1}(x,y) = \int K(x,\xi) \Gamma_n(\xi,y) \, d\mu(\xi) - K(x,y) \, \Gamma_n(x_0,y_0), \quad n \geq 0.
    \end{aligned}
\end{equation}
This recursion corresponds to point-evaluation functionals, which naturally arise in $L^\infty$ settings but fall outside the class of spaces with order-continuity. While mollification procedures may be used to approximate point-evaluations, establishing convergence of resulting recursions requires additional regularity assumptions. This suggests potential extensions to more general setup, such as Orlicz space which generalizes $L^p$ spaces. In general, the norm is not order-continuous in Orlicz space. However, Doeblin condition and kernel lift with mollification may give a crucial idea for the asymptotic behavior on a space where functions have unbounded growth.

%%%%%
\subsection{Organization of the paper}
In Section~2, we prove Proposition~\ref{prop:quasi-compact} establishing positivity improving and irreducibility under a Doeblin-type condition.
In Section~3, we develops the rank-one inversion known as Sherman--Morrison formula and derives the decomposition of the resolvent, which is stated in Proposition~\ref{prop:resolvent_decomposition}. We also give the proof of Theorem~\ref{thm:eigenfunction}.
Section~4 applies the facts to the spectral theory of $T$, establishing the existence, uniqueness, and simplicity of the dominant eigenvalue and its characterization via $D(\lambda)$.
Section~5 constructs the corrected kernels and shows the kernel-level representation.
Appendix~\ref{s:change_meas} discusses invariance under changes of measure, and Appendix~\ref{s:PF} presents Perron--Frobenius-type results and examples illustrating power-Doeblin conditions as stated in \cite{A2025, H2007}. Appendix~\ref{s:kernel-rankone} yields that a constructive kernel recursion provides stable computation for integral projection models. In the end, Appendix~\ref{s:numerical} yields an example and numerical results showing an advantage of our approach comparing to the existing theory.

%%%%%%%%%%
\section{Structural Properties of the Operator} \label{s:structure}

%%%%%
\subsection{Positivity-improving property and irreducibility}
Minorization immediately yields a lower bound for the action of $T$ on positive functions:
\begin{equation} \label{eq:operator_T}
    T f (x) = \int_X K(x,y) \, f(y) \; d\mu(y) \geq \alpha u_0(x) \Phi[f] \qquad \forall f \in E_+.
\end{equation}

\begin{Lem} \label{lem:positive-improving}
    Assume Assumption\ref{assump:doeblin} and that $\Phi$ is strictly positive. Then
    \begin{itemize}
        \item[\bf 1. ] $T$ is positive improving, i.e.
        \begin{equation}
            f \geq 0, \, f \not\equiv 0 \quad \Rightarrow \quad T f > 0.
        \end{equation}
        \item[\bf 2.] $T$ has no nontrivial closed $T$-invariant band of $E$.
        \item[\bf 3.] Consequently, $T$ is irreducible.
    \end{itemize}
\end{Lem}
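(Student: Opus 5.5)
The argument rests on the pointwise minorization \eqref{eq:operator_T}, $Tf \ge \alpha u_0 \Phi[f]$ for $f \in E_+$, together with strict positivity of $\Phi$. For item 1, take $f \ge 0$ with $f \not\equiv 0$. Strict positivity gives $\Phi[f] > 0$, so $Tf(x) \ge \alpha\,\Phi[f]\,u_0(x)$ a.e. Hence $Tf > 0$ a.e. as soon as $u_0 > 0$ a.e. Here I have to be explicit about an assumption that the statement leaves implicit: Assumption~\ref{assump:doeblin} only gives $u_0 \in E_+$. I would therefore either read positivity of $u_0$ a.e. into the assumption (the natural reading, since $g>0$ a.e. is imposed in the same way), or state the conclusion as "$Tf \ge c\,u_0$ with $c>0$", i.e. $Tf$ is strictly positive on the support of $u_0$. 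In the write-up I take $u_0$ to be a quasi-interior point, $u_0>0$ a.e., and flag this. I also note that $\Phi[f]$ is defined by $\int fg\,d\mu$ and is finite, since $g \in E^\times$.

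For item 2, let $B \subset E$ be a closed band with $TB \subset B$ and $B \ne \{0\}$. Pick $0 \ne h \in B$; then $|h| \in B$ because bands are ideals, so we may take $f = |h| \in B_+ \setminus \{0\}$. Since $B$ is an ideal, $0 \le \alpha\,\Phi[f]\,u_0 \le Tf \in B$ gives $u_0 \in B$. In a Banach function space with order-continuous norm, bands are exactly the sets $\{h : h = 0 \text{ a.e. off } A\}$ for measurable $A$. Because $u_0>0$ a.e., its carrier is all of $X$, so $B = E$. Equivalently, the band generated by a weak unit is all of $E$. So the only closed invariant bands are $\{0\}$ and $E$.

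For item 3, irreducibility in the Banach-lattice sense means there is no nontrivial closed $T$-invariant ideal. The argument of item 2 applies verbatim to a closed ideal $J$: it yields $u_0 \in J$, and a closed ideal containing a quasi-interior point is all of $E$. This is where order continuity is used again, since a weak unit is then quasi-interior. The same conclusion also follows from item 1, because a positivity-improving operator is irreducible.

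The main obstacle is this band/ideal bookkeeping, specifically passing from "$u_0 \in J$" to "$J = E$". For that I would cite the characterization of bands in order-continuous Banach function spaces \cite{MN1991} and use $u_0>0$ a.e. Everything else is direct from \eqref{eq:operator_T}.
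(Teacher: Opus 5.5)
Your proof is correct and follows the same route as the paper: the minorization together with strict positivity of $\Phi$ forces $u_0$ into any nonzero invariant band (or closed ideal), and from there $B=E$. Your explicit assumption $u_0>0$ a.e. is genuinely needed, and the paper's inequality $Tf(x)\ge \alpha u_0(x)\Phi[f]>0$ uses it tacitly. Your passage from $u_0\in B$ to $B=E$, via the measurable-set description of bands or $u_0$ being a quasi-interior point, is more careful than the paper's step ``$Tg>0$, so $g\in B$'', which does not by itself place $g$ in $B$.
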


\begin{proof}
    Recall that a band in E is a solid, order-closed ideal. Positivity improving follows immediately from \eqref{eq:operator_T}: for any $f \in E_+\setminus \{0\}$,
    \begin{equation}
        T f(x) \geq \alpha u_0(x) \Phi[f] > 0.
    \end{equation}
    Let $B \subset E$ be a closed $T$-invariant band. Choose $0 \neq f \in B \cap E_+$. Then $T f \in B$, and thus \eqref{eq:operator_T} implies $u_0 \in B$. For any $g \in E_+\setminus \{0\}$,
    \begin{equation}
        T g \geq \alpha u_0 \Phi[g] > 0,
    \end{equation}
    so $g \in B$, hence $B = E$. Thus no nontrivial closed $T$-invariant band exists and $T$ is irreducible.
\end{proof}

%%%%%
\subsection{Proof of Proposition~\ref{prop:quasi-compact}}
Define the rank-one operator
\begin{equation}
    P_0 = u_0 \otimes \Phi.
\end{equation}
The minorization inequality \eqref{eq:operator_T} can be rewritten as an operator inequality:
\begin{equation} \label{eq:doeblin_operator}
    T f \geq \alpha P_0 f \qquad \forall f \in E_+.
\end{equation}
From Lemma~\ref{lem:positive-improving}, we take the following positive operator
\begin{equation}
R := T - \alpha P_0.
\end{equation}
Then $R > 0$ and
\begin{equation} \label{eq:decomposition_T}
    T = \alpha P_0 + R.
\end{equation}
Since $P_0$ is rank-one, it is compact and hence $T$ is a compact perturbation of the positive operator $R$. Also since $P_0$ is compact, the essential spectral radius of $T$ coincides with that of $R$. The Doeblin condition serves to push down this radius of $R$ below the dominant eigenvalue of $T$.

Under the Doeblin-type rank-one minorization such an operator is quasi-compact; see, e.g., \cite{S1974, S2006} or \cite{H2007}. The next section develops the resolvent identity and the Birman--Schwinger resolvent factorization arising from the rank-one nature of $P_0$. Related quasi-compactness results for positive operators under minorization or uniform positivity conditions can be found in \cite{S1974} and \cite{MT2009}.

\begin{Rem}[Boundedness of operators]
    \rm The boundedness of $T$ yields the Neumann series expression for $|\lambda| > \|T\|$, which gives the resolvent positivity. The boundedness also yields the positivity of $e^{s T}$ for any $s \geq 0$. Thus
    \begin{equation}
        (\lambda I - T)^{-1} = \int_0^\infty e^{-\lambda s} \, e^{s T} \; ds \geq 0,
    \end{equation}
    which shows resolvent positivity of $T$. On the other hand, the boundedness of $R$ is guaranteed by the decomposition \eqref{eq:decomposition_T} and the boundedness of $T$. This yields the resolvent positivity of $R$ for $\lambda > \rho(R)$. Then, by simple computation with the resolvent identity, the resolvent of $R$ is decreasing in $\lambda$, which will be crucial for the uniqueness of the dominant eigenvalue.
\end{Rem}

%%%%%%%%%%
\section{Construction of the Characteristic Equation} \label{s:characteristic}

In this section we will show the explicit form of an analytic function that yields spectral projection onto the eigenspace of the operator $T$ as a residue at some $\lambda_0$. Though the philosophy of analysis follows the traditional manner to some extent, we consider more general setting and the analytic function is no longer defined as the determinant, such as Evans function in the theory on nonlinear wave or fluid equations. As in the previous section, we focus on the operator
\begin{equation}
    T = \alpha P_0 + R, \qquad P_0 = u_0 \otimes \Phi,
\end{equation}
where $P_0$ is a rank-one and $R \geq 0$. This decomposition enables an exact formula for the resolvent $(\lambda I - T)^{-1}$ separating regular and singular parts.

%%%%%
\subsection{Inversion formula for rank-one perturbations}
We begin with an inversion formula for rank-one operators. Let $a \in E_+$ and $b \in E'_+$. Consider the rank-one operator
\begin{equation}
    S = a \otimes b.
\end{equation}
We record the classical inversion identity, stated here for completeness. The following inversion formula for rank-one operators as known as Sherman--Morrison formula, is classical (see \cite{B1951, K1966}).

\begin{Lem}[Sherman--Morrison formula \cite{SM1950, B1951}] \label{lem:rank-one}
Let $a \in E_+$ and $b \in E'_+$. If $1 - b[a] \neq 0$, then the operator $I - a \otimes b$ is invertible and
\begin{equation}
    (I - a \otimes b)^{-1} = I + \frac{a \otimes b}{1 - b[a]} \qquad b[a] \neq 1
\end{equation}
\end{Lem}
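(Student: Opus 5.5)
The approach is to verify directly that the proposed inverse is a two-sided inverse. Put $\beta := b[a] \neq 1$ and define the candidate operator
\begin{equation}
    J := I + \frac{1}{1-\beta}\, a \otimes b ,
\end{equation}
where $(a \otimes b) f := b[f]\, a$. Since $b \in E'$, this operator is bounded with norm at most $\|a\|_E \, \|b\|_{E'}$. Hence $J$ is a bounded operator on $E$, and no positivity is needed for the algebra. The positivity assumptions $a \in E_+$, $b \in E'_+$ only become relevant later, when the formula is applied to $T$.

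The key step is the composition rule for rank-one operators:
\begin{equation}
    (a \otimes b)(a \otimes b) f = b[f]\,(a \otimes b)a = b[f]\, b[a]\, a,
\end{equation}
that is, $(a\otimes b)^2 = \beta\, (a \otimes b)$. Expanding the product, I would then compute
\begin{equation}
    (I - a\otimes b)\, J = I - a\otimes b + \frac{1}{1-\beta}\, a\otimes b - \frac{\beta}{1-\beta}\, a \otimes b .
\end{equation}
The coefficient of $a\otimes b$ here is $-1 + \frac{1-\beta}{1-\beta} = 0$, so the product equals $I$. The product $J(I - a\otimes b)$ is handled by the same computation, since $a\otimes b$ commutes with both $I$ and itself. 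Therefore $I - a\otimes b$ is invertible with inverse $J$, and $J$ is bounded.

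For completeness I would also note why the condition $\beta \neq 1$ is necessary. If $\beta = 1$, then $(I - a\otimes b)a = a - a = 0$ with $a \neq 0$, so the operator has a nontrivial kernel and cannot be invertible. There is no real obstacle in this argument. The only point needing care is keeping track of the order of composition in $(a\otimes b)^2$, so that the scalar factor is $b[a]$ and not some other pairing; this is exactly the scalar that later becomes $\alpha\,\Phi[R_\lambda u_0]$ in Proposition~\ref{prop:resolvent_decomposition}.
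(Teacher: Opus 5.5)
Your proof is correct and is essentially the paper's argument: a direct check of the candidate inverse using $(a\otimes b)^2 = b[a]\,(a\otimes b)$. The paper verifies the product in only one order, $(I + \frac{a\otimes b}{1-b[a]})(I - a\otimes b) = I$, so your remark that both factors are polynomials in $a\otimes b$ and therefore commute, which makes the inverse genuinely two-sided, is a small but worthwhile addition.
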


\begin{proof}
    Multiply $(I - a \otimes b)$ for both sides. The right-hand side is
    \begin{equation}
        \left( I + \frac{a \otimes b}{1 - b[a]} \right) (I - a \otimes b) = I - a \otimes b + \frac{a \otimes b}{1 - b[a]} - \frac{(a \otimes b)(a \otimes b)}{1 - b[a]}
    \end{equation}
    Since for any $x \in E$
    \begin{equation}
        (a \otimes b)(a \otimes b)(x) = (a \otimes b)(a \, b[x]) = a \, b[a] \, b[x] = b[a] \, a \, b[x] = b[a] \, (a \otimes b)(x),
    \end{equation}
    the multiplication of $(a \otimes b)$ is the same as multiplication of $b[a]$. Then the last two terms can be computed as
    \begin{equation}
        \frac{a \otimes b}{1 - b[a]} - \frac{(a \otimes b)(a \otimes b)}{1 - b[a]} = a \otimes b
    \end{equation}
    which yields $\left( I + \frac{a \otimes b}{1 - b[a]} \right) (I - a \otimes b) = I$. Hence the proof of the lemma is completed.
\end{proof}

\begin{Cor}[Pole and residue of the resolvent of a rank-one operator] \label{cor:pole_residue}
    Let $S := a \otimes b$. Then the resolvent $(\lambda I - S)^{-1}$ admits the representation
    \begin{equation}
        (\lambda I - S)^{-1} = \frac{1}{\lambda} \, I + \frac{a \otimes b}{\lambda (\lambda - b[a])} \qquad \lambda \neq 0, b[a]
    \end{equation}
    At $\lambda = b[a]$, the resolvent has a simple pole with residue
    \begin{equation}
        \text{Res}_{\lambda = b[a]} (\lambda I - S)^{-1} = \frac{a \otimes b}{b[a]}.
    \end{equation}
    If $b[a] = 1$, the residue equals the rank-one projection $a \otimes b$.
\end{Cor}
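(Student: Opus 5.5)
The statement to prove is Corollary~\ref{cor:pole_residue}. The plan is to reduce it to Lemma~\ref{lem:rank-one} by factoring out $\lambda$. For $\lambda \neq 0$ write
\begin{equation}
    \lambda I - S = \lambda \Bigl( I - \tfrac{1}{\lambda}\, a \otimes b \Bigr) = \lambda \bigl( I - a' \otimes b \bigr), \qquad a' := \lambda^{-1} a .
\end{equation}
The positivity hypothesis $a \in E_+$ in Lemma~\ref{lem:rank-one} plays no role in its proof, which is a purely algebraic multiplication. So the lemma applies for complex $\lambda$ with $a'$ in place of $a$. Its condition $1 - b[a'] = 1 - b[a]/\lambda \neq 0$ is exactly $\lambda \neq b[a]$.

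The lemma then gives
\begin{equation}
    (\lambda I - S)^{-1} = \frac{1}{\lambda}\Bigl( I + \frac{\lambda^{-1} a \otimes b}{1 - \lambda^{-1} b[a]} \Bigr) = \frac{1}{\lambda} I + \frac{a \otimes b}{\lambda(\lambda - b[a])},
\end{equation}
which is the claimed representation after simplifying the fraction. For completeness I would also check that this is a two-sided inverse. The lemma as written verifies only a left inverse. The right-multiplication computation is identical, since $(a\otimes b)^2 = b[a]\,(a\otimes b)$ again, so I would either note this or redo it directly for $\lambda I - S$.

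For the residue, I assume $b[a] \neq 0$, which is implicit in the statement since the residue is divided by $b[a]$. Then the term $\frac{1}{\lambda} I$ is holomorphic near $\lambda = b[a]$. The second term is $\frac{1}{\lambda}\cdot\frac{a\otimes b}{\lambda - b[a]}$, a simple pole. Its residue is $\lim_{\lambda\to b[a]} (\lambda - b[a]) \cdot \frac{a\otimes b}{\lambda(\lambda-b[a])} = \frac{a\otimes b}{b[a]}$, so the pole at $b[a]$ is simple.

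In the case $b[a]=1$ the residue is $a \otimes b$. This is idempotent because $(a\otimes b)^2 = b[a]\,(a\otimes b) = a\otimes b$, so it is a rank-one projection, and it agrees with the Riesz projection $\frac{1}{2\pi i}\oint (\lambda I - S)^{-1}\,d\lambda$ around the isolated point $b[a]$. The main delicate point is the degenerate case $b[a]=0$: then $0$ and $b[a]$ coincide, the pole at $0$ becomes of order two, and the residue formula is meaningless. I would state explicitly that the residue claim requires $b[a] \neq 0$.
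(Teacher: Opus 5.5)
Your proposal is correct and follows the route the paper intends: the paper gives no separate proof and presents the corollary as an immediate consequence of the Sherman--Morrison lemma, which you apply via $\lambda I - S = \lambda(I - \lambda^{-1} a\otimes b)$ and then read off the simple pole. Your additional points are legitimate refinements that the paper leaves implicit: the lemma's proof checks only a left inverse, so the right-multiplication check is needed, and the residue formula requires $b[a]\neq 0$.
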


%%%%%
\subsection{Proof of Proposition~\ref{prop:resolvent_decomposition}}
Let $\lambda \in \rho(T)$, so that the resolvent
\begin{equation}
    R_\lambda = (\lambda I - R)^{-1}
\end{equation}
is well-defined. We factorize
\begin{equation}
    \lambda I - T = (\lambda I - R) - \alpha P_0 = (\lambda I -R) (I - \alpha R_\lambda P_0).
\end{equation}
Since $P_0 = u_0 \otimes \Phi$, the second factor has the form covered by Lemma~\ref{lem:rank-one}:
\begin{equation}
    I - \alpha R_\lambda P_0 = I - \alpha (R_\lambda u_0) \otimes \Phi.
\end{equation}
Thus its inverse exists whenever
\begin{equation}
    1 - \alpha \Phi[R_\lambda u_0] \neq 0.
\end{equation}
This yields the decomposition of the resolvent $(\lambda I - T)^{-1}$.
\begin{equation}
    (\lambda I - T)^{-1} \, = \, R_\lambda + \frac{\alpha \, (R_\lambda u_0) \otimes (\Phi R_\lambda)}{1 - \alpha \, \Phi[R_\lambda u_0]}.
\end{equation}
For $|\lambda|$ is sufficiently large, the right-hand side converges in operator norm and equals the analytic resolvent $(\lambda I - T)^{-1}$.

The analytic denominator in \eqref{eq:sm-resolvent}
\begin{equation}
    D(\lambda) = 1 - \alpha \Phi[R_\lambda u_0] \qquad (\lambda \in \rho(R))
\end{equation}
encodes all isolated eigenvalues of $T$ lying in the resolvent set of $R$.

\begin{Cor} \label{cor:R_1}
    Let $\lambda_0 \in \rho(R)$ satisfy
    \begin{equation}
        D(\lambda_0) = 0, \qquad D'(\lambda_0) \neq 0
    \end{equation}
    Then $(\lambda I - T)^{-1}$ has a simple pole at $\lambda = \lambda_0$ with residue
    \begin{equation}
        \text{Res}_{\lambda = \lambda_0} (\lambda I - T)^{-1} \; = \; \frac{\alpha \, (R_{\lambda_0} u_0)\otimes(\Phi R_{\lambda_0})}{D'(\lambda_0)}.
    \end{equation}
    This operator is rank-one and equals the spectral projection onto the eigenspace of $T$ at $\lambda_0$.
\end{Cor}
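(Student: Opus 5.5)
We will prove Corollary~\ref{cor:R_1} directly from the Sherman--Morrison representation \eqref{eq:sm-resolvent} of Proposition~\ref{prop:resolvent_decomposition}. The plan is to show that only the scalar denominator can be singular near $\lambda_0$, and then to compute the residue by expanding that denominator to first order.

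\textbf{Analyticity of the ingredients.} Since $\rho(R)$ is open and $\lambda\mapsto R_\lambda$ is analytic there in operator norm, the following maps are analytic on a small disc $U$ around $\lambda_0$ contained in $\rho(R)$:
\begin{itemize}
\item the $E$-valued map $\lambda\mapsto R_\lambda u_0$;
\item the $E'$-valued map $\lambda\mapsto \Phi R_\lambda$;
\item the scalar function $D(\lambda)=1-\alpha\Phi[R_\lambda u_0]$.
\end{itemize}
From the resolvent identity we obtain $D'(\lambda)=\alpha\Phi[R_\lambda^2u_0]$, which we keep for later reference. Because $D(\lambda_0)=0$ and $D'(\lambda_0)\neq0$, the zero is simple and isolated. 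After shrinking $U$, we may assume $D\neq0$ on $U\setminus\{\lambda_0\}$.

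Proposition~\ref{prop:resolvent_decomposition} then gives \eqref{eq:sm-resolvent} at every point of $U\setminus\{\lambda_0\}$. Hence $\lambda_0$ is an isolated singularity of $(\lambda I-T)^{-1}$, and the punctured disc lies in $\rho(T)$. Write $N(\lambda):=\alpha (R_\lambda u_0)\otimes(\Phi R_\lambda)$. The term $R_\lambda$ in \eqref{eq:sm-resolvent} is holomorphic on all of $U$, so the whole singular behaviour comes from $N(\lambda)/D(\lambda)$. Since $N$ is holomorphic and $D(\lambda)=D'(\lambda_0)(\lambda-\lambda_0)h(\lambda)$ with $h$ holomorphic and $h(\lambda_0)=1$, the pole is at most simple. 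Its residue is $N(\lambda_0)/D'(\lambda_0)$, which is exactly the claimed formula.

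\textbf{The pole is genuine.} We must check that $N(\lambda_0)\neq0$. Since $T\geq\alpha P_0$ forces $\alpha u_0\neq0$, it suffices that $R_{\lambda_0}u_0\neq0$, which holds because $R_{\lambda_0}$ is injective. The same argument gives $\Phi R_{\lambda_0}\neq0$, because $R_{\lambda_0}$ is surjective and $\Phi\neq0$.

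\textbf{Identification with the spectral projection.} The Riesz projection is $Q=\frac{1}{2\pi i}\oint_{|\lambda-\lambda_0|=\varepsilon}(\lambda I-T)^{-1}\,d\lambda$, which coincides with the residue at a simple pole. The residue is rank-one by its form. It remains to check that it is a projection onto the eigenspace.
\begin{itemize}
\item \emph{Eigenvector.} Set $v=R_{\lambda_0}u_0$. Then $(\lambda_0-R)v=u_0$, and $\alpha\Phi[v]=1$ because $D(\lambda_0)=0$. Hence $Tv=Rv+\alpha u_0\Phi[v]=\lambda_0v-u_0+u_0=\lambda_0 v$.
\item \emph{Adjoint eigenvector.} By the symmetric computation, $\psi=\Phi R_{\lambda_0}$ satisfies $\psi T=\lambda_0\psi$.
\item \emph{Idempotence.} The normalisation follows from $\alpha\,\psi[v]=\alpha\Phi[R_{\lambda_0}^2u_0]=D'(\lambda_0)$. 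Therefore $\big(\tfrac{\alpha}{D'(\lambda_0)}v\otimes\psi\big)^2=\tfrac{\alpha}{D'(\lambda_0)}v\otimes\psi$.
\end{itemize}
Since the pole is simple, $(T-\lambda_0)Q=0$. Thus the range of $Q$ is the whole generalized eigenspace, it equals $\mathrm{span}\{v\}$, and the eigenvalue is algebraically simple.

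\textbf{Main obstacle.} The delicate point is the identity $D'(\lambda_0)=\alpha\Phi[R_{\lambda_0}^2u_0]$, together with the sign convention in $D$. Differentiating $R_\lambda$ gives $\frac{d}{d\lambda}R_\lambda=-R_\lambda^2$, so $D'=+\alpha\Phi[R_\lambda^2u_0]$, and the computed residue matches the stated one with this sign. I would verify this identity carefully, since it is exactly what makes the residue idempotent rather than merely rank-one.
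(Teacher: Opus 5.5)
Your proposal is correct and takes the paper's route. The paper reads the residue directly off the Sherman--Morrison formula of Proposition~\ref{prop:resolvent_decomposition} at the simple zero of $D$, and checks separately, in the proof of Theorem~\ref{thm:eigenfunction}, that $R_{\lambda_0}u_0$ is an eigenvector. You additionally supply what the paper only asserts: idempotence via $\alpha\Phi[R_{\lambda_0}^2u_0]=D'(\lambda_0)$, and $(T-\lambda_0)Q=0$ from the simple pole, which together identify the residue as the Riesz projection onto $\operatorname{span}\{R_{\lambda_0}u_0\}$.

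One justification needs repair. $T\geq\alpha P_0$ does not force $u_0\neq0$, since the inequality holds trivially when $u_0=0$. Use $D(\lambda_0)=0$ instead: $\alpha\Phi[R_{\lambda_0}u_0]=1$ immediately gives $R_{\lambda_0}u_0\neq0$ and $\Phi R_{\lambda_0}\neq0$, so the residue is nonzero and the pole is genuine.
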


%%%%%
\subsection{Renewal structure: Proof of Theorem~\ref{thm:eigenfunction}}
The rank-one perturbation yields a renewal structure. We define the Doeblin-type recursion by
\begin{equation}
\Gamma_0 = u_0, \qquad \Gamma_n = R \, \Gamma_{n-1} \; (n\geq1)
\end{equation}
Then, we have $\Gamma_n = R^n \, u_0$. By Corollary~\ref{cor:R_1}, we express the residue at $\lambda_0$, denoting by $w$,
\begin{equation} \label{eq:eigen_func}
    \begin{aligned}
        w = \lim_{\lambda\to\lambda_0} (\lambda - \lambda_0) \, \frac{\alpha (\lambda I - R)^{-1} u_0 }{D(\lambda)} = \frac{\alpha}{D'(\lambda_0)} (\lambda_0 I - R)^{-1} u_0\\
        %&= \frac{\alpha}{D'(\lambda_0)} \sum_{n=0}^\infty \lambda_0^{-(n+1)} \, \Gamma_n
    \end{aligned}
\end{equation}
%where the Neumann series is well-defined because $\lambda_0 > \rho(R)$. 
The expression \eqref{eq:eigen_func} yields
\begin{equation}
    (\lambda_0 I - R) \, w = \frac{\alpha}{D'(\lambda_0)} u_0.
\end{equation}
Since $D(\lambda_0) = 0$ implies that $\Phi[w] \, D'(\lambda_0) = 1$,
\begin{equation}
    \begin{aligned}
    (\lambda_0 I - T) w &= (\lambda_0 I  - \alpha u_0 \otimes \Phi - R) \, w\\
    &= (\lambda_0 I - R) \, w - \alpha u_0 \Phi[w] = \alpha u_0 \left( \frac{1}{D'(\lambda_0)} - \Phi[w] \right) = 0
    \end{aligned}
\end{equation}
which yields $T w = \lambda_0 w$, that is, $w$ is an eigenvector of $T$.

\medskip

\noindent The rank-one subtraction $T = P + R$ makes the spectral radius $\rho(R)$ smaller than that of $T$. This yields the well-definedness of several series in this paper.

\begin{Lem} \label{lem:spectral_radius}
    $\rho(R) < \rho(T)$.
\end{Lem}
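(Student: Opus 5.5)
To prove Lemma~\ref{lem:spectral_radius}, I will play the scalar function $D(\lambda)=1-\alpha\Phi[R_\lambda u_0]$ on the real half-line $(\rho(R),\infty)$ against the Perron--Frobenius structure of the positive operator $T$. The first step is the weak inequality $\rho(R)\le\rho(T)$. Since $0\le R\le T$ in the operator order (by \eqref{eq:doeblin_operator} and \eqref{eq:decomposition_T}), induction gives $0\le R^n f\le T^n f$ for $f\in E_+$. Because $E$ is a Banach lattice, $\|R^n\|\le\|T^n\|$, and Gelfand's formula yields $\rho(R)\le\rho(T)$.

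Next I will show that $\rho(T)$ cannot equal $\rho(R)$. Set $r:=\rho(R)$; by the Krein--Rutman/Pringsheim theorem for positive operators, $r\in\sigma(R)$. For real $\lambda>r$ the resolvent is the Neumann series
\begin{equation}
    R_\lambda u_0=\sum_{n\ge0}\lambda^{-(n+1)}\Gamma_n\ge 0 .
\end{equation}
Consequently $h(\lambda):=\alpha\Phi[R_\lambda u_0]$ is positive, continuous and strictly decreasing on $(r,\infty)$, with $h(\lambda)\to0$ as $\lambda\to\infty$. Strictness holds because $\Phi$ is strictly positive and $\Gamma_0=u_0\neq0$. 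This is the monotonicity noted in the Remark on boundedness.

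The proof then splits into two cases according to the limit $h(r^+):=\lim_{\lambda\downarrow r}h(\lambda)\in(0,\infty]$.

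\textbf{Case $h(r^+)>1$.} The intermediate value theorem gives a unique $\lambda_0>r$ with $D(\lambda_0)=0$. The computation in the proof of Theorem~\ref{thm:eigenfunction} then produces an eigenvector $w=\frac{\alpha}{D'(\lambda_0)}R_{\lambda_0}u_0$ with $Tw=\lambda_0 w$. Here $D'(\lambda_0)=\alpha\Phi[R_{\lambda_0}^2u_0]>0$, so $w\in E_+\setminus\{0\}$. Hence $\rho(T)\ge\lambda_0>r$.

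\textbf{Case $h(r^+)\le 1$.} This is the main obstacle, and as stated the lemma does not follow from the assumptions alone. For example, $R$ could have a large spectral radius carried by a part that $u_0$ barely feeds into.

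My first approach is to show that the case cannot occur by a blow-up argument. For $f\in E_+$ and $\lambda>r$, the resolvent identity gives $R_\lambda f\ge \lambda^{-1}f$. Moreover, $T\ge\alpha u_0\otimes\Phi$ means that any component reaching $\Phi$ feeds back into $u_0$. Take $\lambda_k\downarrow r$ and unit vectors $f_k\ge0$ with $\|R_{\lambda_k}f_k\|\to\infty$; such $f_k$ exist because $\|R_\lambda\|\to\infty$ as $\lambda\downarrow r$ for positive operators. I would then try to bound $\Phi[R_{\lambda_k}f_k]$ from below in terms of $\|R_{\lambda_k}f_k\|$ using strict positivity of $\Phi$ and irreducibility (Lemma~\ref{lem:positive-improving}). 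Combining this with the renewal identity
\begin{equation}
    (\lambda I-T)^{-1}f=R_\lambda f+\frac{\alpha\,\Phi[R_\lambda f]}{D(\lambda)}\,R_\lambda u_0
\end{equation}
and the positivity of $(\lambda I-T)^{-1}$ for $\lambda>\rho(T)$ would give a contradiction when $\rho(T)=r$. Indeed, in that situation $D(\lambda)>0$ stays bounded away from zero only if $h(r^+)<1$, which would force $(\lambda I-T)^{-1}$ to remain comparable to $R_\lambda$. That contradicts irreducibility, since $r$ would then be a spectral value of $R$ with no positive eigenvector coupling to $u_0$.

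If this argument cannot be closed, I would state explicitly the additional hypothesis $\lim_{\lambda\downarrow\rho(R)}\alpha\Phi[R_\lambda u_0]>1$, the standard "supercritical renewal" condition. Under it the lemma follows immediately from the first case, and $\lambda_0=\rho(T)$ by the uniqueness argument of Section~4.
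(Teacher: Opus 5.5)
Your Step 1 and your case $h(r^+)>1$ are correct. The latter is the paper's own argument: monotonicity of $h$ plus the intermediate value theorem. The difference is that you conclude $\rho(T)\ge\lambda_0>\rho(R)$ directly from the positive eigenvector $w=\frac{\alpha}{D'(\lambda_0)}R_{\lambda_0}u_0$. That is cleaner than the paper's appeal to Krein--Rutman to identify $\lambda_0=\rho(T)$.

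The paper never meets your second case. It asserts that, since $R_\lambda$ diverges as $\lambda\downarrow\rho(R)$, $\alpha\Phi[R_\lambda u_0]\to\infty$. That inference is invalid, because blow-up of $\|R_\lambda\|$ says nothing about the scalar $\Phi[R_\lambda u_0]$ for a fixed $u_0$. Your suspicion is right: under the stated hypotheses the lemma is false, by exactly the mechanism you describe.

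Take $E=\ell^1(\mathbb{N})$ with $\mathbb{N}=\{1,2,\dots\}$ and counting measure, and set
\begin{itemize}
\item $R=\mathrm{diag}(d_i)$ with $d_i=1-\frac{1}{i+1}$;
\item $u_0(i)=2^{-i}$ and $g\equiv1$, so $\Phi[f]=\sum_i f(i)$ is strictly positive;
\item $\alpha=\frac14$ and $T=R+\alpha\,u_0\otimes\Phi$, i.e.\ $K(i,j)=d_i\delta_{ij}+\alpha 2^{-i}\ge\alpha u_0(i)g(j)$.
\end{itemize}
Then $T$ is bounded and positivity improving, and the spectral radius of $R$ equals $1$. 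Every $\lambda$ with $|\lambda|>1$ lies in the resolvent set of $R$, and
\[
\bigl|\alpha\Phi[R_\lambda u_0]\bigr|\le\alpha\sum_{i\ge1}\frac{2^{-i}}{1-d_i}=\alpha\sum_{i\ge1}(i+1)2^{-i}=\tfrac34<1 .
\]
So $D(\lambda)\neq0$, and $\lambda$ lies in the resolvent set of $T$ by Proposition~\ref{prop:resolvent_decomposition}. Hence the spectral radius of $T$ is at most $1$, and with your Step 1 the two spectral radii coincide.

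Consequently your blow-up argument cannot be closed, and it breaks at its last step. Irreducibility, even positivity improvement, does not produce an eigenvector at the spectral radius unless that point is a pole of the resolvent, i.e.\ without quasi-compactness. In the example, the equation $Tw=w$ forces $w(i)=\alpha(i+1)2^{-i}\Phi[w]$. Hence $\Phi[w]=\frac34\Phi[w]$ and $w=0$. So the spectral radius of $T$ is not even an eigenvalue, $T$ is not quasi-compact, and Lemma~\ref{lem:existence} fails too; the paper's Krein--Rutman step offers no rescue.

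The gap in your proposal is therefore real, but no argument can fill it. The correct course is your fallback: add the supercritical hypothesis $\lim_{\lambda\downarrow\rho(R)}\alpha\Phi[R_\lambda u_0]>1$. The paper's proof tacitly assumes the stronger value $+\infty$. Under this hypothesis your first case is a complete proof of $\rho(R)<\rho(T)$.
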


\begin{proof}
    We first recall that $D(\lambda) = 1 - \alpha \Phi[R_\lambda u_0]$. For $\lambda > \rho(R)$, we have
    \begin{equation}
        \frac{d}{d\lambda} R_\lambda = \frac{d}{d\lambda} (\lambda I - R)^{-1} = -(\lambda I - R)^{-2}
    \end{equation}
    Thus
    \begin{equation}
        \frac{d}{d\lambda} D(\lambda) = - \alpha \Phi_\xi \left[ \frac{d}{d\lambda} R_\lambda u_0 \right] = \alpha \Phi_\xi[R_\lambda^2 u_0] > 0
    \end{equation}
    Therefore, $D(\lambda)$ is increasing in $\lambda > \rho(R)$. Next let $g(\lambda) := \alpha \Phi_\xi[R_\lambda u_0] \geq 0$. Then it is decreasing in $\lambda > \rho(R)$ and $g(\lambda) \to 0$ as $\lambda \to \infty$. Since $R_\lambda$ diverges as $\lambda \downarrow \rho(R)$, $g(\lambda) \to \infty$ as $\lambda \downarrow \rho(R)$. Therefore, by the intermediate theorem, there exists a unique $\lambda_\ast \in (\rho(R),\infty)$ such that $g(\lambda) = 1$. From the fact that $D(\lambda_0) = 0$, i.e. $g(\lambda_0) = 1$ and the classical Krein--Rutman theorem (see Lemma~\ref{lem:existence} in the next section), we have
    \begin{equation}
        \lambda_\ast = \lambda_0 = \rho(T),
    \end{equation}
    which is the unique dominant eigenvalue of $T$. Hence we conclude $\rho(R) < \rho(T)$.
\end{proof}

\smallskip

Hence, by Lemma~\ref{lem:spectral_radius}, we can expand $(\lambda_0 I - R)^{-1}$ in \eqref{eq:eigen_func} and obtain the representation
\begin{equation}
    w = \frac{\alpha}{D'(\lambda_0)} \sum_{n=0}^\infty \lambda_0^{-n} \, \Gamma_n,
\end{equation}
which completes the proof of Theorem~\ref{thm:eigenfunction}.

\medskip

\begin{Rem}
    \rm The characteristic equation yields the condition for the nontrivial solution: $1 = \alpha \Phi[(\lambda I - R)^{-1} u_0]$, which is exactly $D(\lambda) = 0$. Thus the analytic function $D(\lambda)$ describes an alternative representation for eigenstructure on more general functional spaces with weaker assumptions.
\end{Rem}

\begin{Rem}
    \rm When a positive operator $T$ is Riesz decomposable with the simple dominant eigenvalue $\lambda_0$, we have the following 
    \begin{equation}
        T = \lambda_0 P_0 + R, \qquad P_0 = \frac{w_0 \otimes v_0}{\langle v_0, w_0 \rangle},
    \end{equation}
    where $w_0$ and $v_0$ are right and left eigenfunctions, respectively. In our context,
    \begin{equation}
        w_0 = \frac{\alpha R_{\lambda_0} u_0}{D'(\lambda_0)}, \qquad v_0 = \frac{\Phi R_{\lambda_0}}{D'(\lambda_0)}, \qquad \langle v_0, w_0 \rangle = D'(\lambda_0).
    \end{equation}
    Moreover, if $T$ is Markov, i.e., $\lambda_0 = 1$, then $w_0$ and $v_0$ yields the invariant density and distribution, respectively. The asymptotic behavior of Markov process can be given by the one-step information $R_1$.
\end{Rem}

%%%%%%%%%%
\section{Spectral Dominance of the Positive Operator \texorpdfstring{$T$}{}} \label{s:spectral_dominance}

In this section we complete the spectral analysis of the operator
\begin{equation}
    T =\alpha P_0 + R
\end{equation}
constructed in Section~\ref{s:structure}--\ref{s:characteristic}. 
The purpose of this section is to characterize the dominant spectral value of $T$ under the Doeblin condition, which implies primitivity. The absence of other peripheral eigenvalues in this setting should be viewed as a structural consequence of strong positivity, included here to clarify the relation with classical Perron--Frobenius theory. Using quasi-compactness, irreducibility, and the Birman--Schwinger resolvent factorization, we show that $T$ admits a unique strictly dominant eigenvalue, that this eigenvalue equals the spectral radius, and that its eigenspace is one-dimensional. The argument synthesizes the Krein--Rutman theorem with the analytic structure obtained in Section~\ref{s:characteristic}.

%Recall from Section~\ref{s:structure} that $T$ is positive, irreducible, and quasi-compact. Thus the Krein--Rutman theorem applies to $T$ as an operator on the Banach lattice $E$.

%%%%%
\subsection{Existence of a strictly positive eigenvalue}

The following lemma holds directly from the Krein--Rutman theorem. The Krein--Rutman theorem and its generalizations provide a cornerstone for the spectral analysis of positive operators on Banach lattices; see, e.g. \cite{KR1948} for classical formulations, and also \cite{FSGM2023} for recent developments in a semigroup framework covering numerous integro-differential and kinetic examples.

\begin{Lem}[Existence of a positive eigenvalue] \label{lem:existence}
    Since $T$ is positive and quasi-compact, its spectral radius $\rho(T)$ satisfies $\rho(T) > 0$, and there exists a vector $w \in E_+\setminus\{0\}$ such that
    \begin{equation} \label{eq:p-eigenvector}
        T w = \rho(T) \, w.
    \end{equation}
\end{Lem}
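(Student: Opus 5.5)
The plan is to prove the lemma in two stages: first $\rho(T)>0$, then the eigenvector, which should come from the Krein--Rutman theorem in its Banach-lattice form (de Pagter / Schaefer version for positive operators with $r_{\mathrm{ess}}(T)<\rho(T)$).

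\emph{Step 1: $\rho(T)>0$.} By the Doeblin lower bound \eqref{eq:doeblin_operator} and positivity of $R$, an induction gives $T^n u_0 \geq \alpha^n \Phi[u_0]^{n-1}\,\Phi[u_0]\, u_0 \cdot$ (suitably) $\geq (\alpha\Phi[u_0])^n u_0$ for all $n$. Since $\Phi$ is strictly positive and $u_0\neq 0$, we have $\Phi[u_0]>0$. In a Banach lattice $0\le f\le h$ implies $\|f\|\le\|h\|$, so $\|T^n\|\,\|u_0\|\ge (\alpha\Phi[u_0])^n\|u_0\|$. Gelfand's formula then yields $\rho(T)\ge \alpha\Phi[u_0]>0$.

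\emph{Step 2: spectral structure at $\rho(T)$.} For a positive operator on a Banach lattice, $\rho(T)\in\sigma(T)$ (Krein's theorem, via the positive Neumann series and Pringsheim-type argument on $\lambda\mapsto(\lambda I-T)^{-1}$ for real $\lambda>\rho(T)$). Quasi-compactness, i.e.\ $r_{\mathrm{ess}}(T)<\rho(T)$ as asserted after Proposition~\ref{prop:quasi-compact}, then makes $\rho(T)$ an isolated point of $\sigma(T)$ and a pole of finite rank of the resolvent. Let $m\ge1$ be its order and $Q:=\lim_{\lambda\downarrow\rho(T)}(\lambda-\rho(T))^m(\lambda I-T)^{-1}$, the leading Laurent coefficient. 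For real $\lambda>\rho(T)$ the resolvent is given by a positive Neumann series, so it is a positive operator; hence $Q\ge0$, and $Q\neq0$ by definition of the pole order.

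\emph{Step 3: the eigenvector.} From the Laurent expansion, $(T-\rho(T))Q=0$, i.e.\ $\operatorname{ran}Q\subset\ker(\rho(T)I-T)$. Since $Q\neq0$ is positive, there is $f\in E_+$ with $Qf\neq0$ (positive elements span $E$), and $w:=Qf\in E_+\setminus\{0\}$ satisfies \eqref{eq:p-eigenvector}. The main obstacle is Step 2: the paper only cites quasi-compactness, so I would need to confirm $r_{\mathrm{ess}}(T)<\rho(T)$ rather than merely assume it. A self-contained fallback avoids this. Either the analysis in Lemma~\ref{lem:spectral_radius}, namely $g(\lambda)\to\infty$ as $\lambda\downarrow\rho(R)$, gives directly a real zero $\lambda_*>\rho(R)$ of $D$ with $w=R_{\lambda_*}u_0\ge0$ an eigenvector as in the proof of Theorem~\ref{thm:eigenfunction}. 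Or, if the blow-up of $g$ is not available, pass to the dual: argue $\Phi R_\lambda u_0\to\infty$ by positivity of the resolvent at its spectral radius. That last point is exactly the delicate step, since it is where one must prevent a pole of $R_\lambda$ that is invisible to $\Phi$ and $u_0$. Strict positivity of $\Phi$ together with the positivity-improving property of Lemma~\ref{lem:positive-improving} should handle this.
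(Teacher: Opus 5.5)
Your main line (Steps 1--3) is correct and is essentially the paper's route. The paper simply invokes the Krein--Rutman theorem for a positive quasi-compact $T$. Your Steps 2--3 are exactly the standard proof of that theorem: $\rho(T)\in\sigma(T)$ by Krein's theorem, $\rho(T)$ is a pole of finite rank when $r_{\mathrm{ess}}(T)<\rho(T)$, the leading Laurent coefficient $Q$ is positive, and $w=Qf$ for some $f\in E_+$. Step 1 gives a nice explicit bound $\rho(T)\ge\alpha\Phi[u_0]$ (it needs $u_0\neq0$), though $\rho(T)>0$ is automatic once $r_{\mathrm{ess}}(T)<\rho(T)$ is assumed.

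You should drop the ``self-contained fallback''. The blow-up $\Phi[R_\lambda u_0]\to\infty$ as $\lambda\downarrow\rho(R)$, which is also what the proof of Lemma~\ref{lem:spectral_radius} asserts, is false in general, and strict positivity of $\Phi$ together with positivity improvement of $T$ does not rescue it.

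Here is a counterexample. Take $E=\ell^1(\mathbb{Z}^3)$, let $(Rf)(x)=\frac16\sum_{|e|_1=1}f(x+e)$ be the simple random walk operator, set $u_0(x)=g(x)=e^{-|x|_1}$, and let $K=R+\alpha\,u_0\otimes g$.
\begin{itemize}
\item Assumption~\ref{assump:doeblin} holds, $\Phi$ is strictly positive, $T$ is positivity improving, and $\rho(R)=1$.
\item By transience, for all $\lambda>1$ you have $\Phi[R_\lambda u_0]\le\Phi[Gu_0]\le G(0,0)\,\|u_0\|_1\|g\|_1=:C<\infty$, where $G=\sum_{n\ge0}R^n$ is the Green operator.
\item Choose $\alpha<1/C$. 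For $|\lambda|>1$ you then get $|\alpha\Phi[R_\lambda u_0]|\le\alpha\Phi[R_{|\lambda|}u_0]<1$. So $D$ has no zeros there, and the Sherman--Morrison formula gives $\rho(T)\le1$. Since $0\le R\le T$, in fact $\rho(T)=1$.
\item Summing $Tw=w$ over $\mathbb{Z}^3$ gives $\alpha\Phi[w]\|u_0\|_1=0$. Hence $\Phi[w]=0$, and no $w\in E_+\setminus\{0\}$ exists.
\end{itemize}

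So $\|R_\lambda\|\to\infty$ does not force the scalar $\Phi[R_\lambda u_0]$ to diverge. More importantly, quasi-compactness is \emph{not} a consequence of Assumption~\ref{assump:doeblin}. It cannot be ``confirmed'' and must remain a hypothesis, as the lemma's statement keeps it; without it the lemma is false.

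Even in cases where the blow-up does hold, your fallback only produces an eigenvalue $\lambda_*>\rho(R)$. You would still have to show $\lambda_*=\rho(T)$. This follows from $|\Phi[R_\lambda u_0]|\le\Phi[R_{|\lambda|}u_0]$ together with the strict monotonicity of $\lambda\mapsto\Phi[R_\lambda u_0]$ on $(\rho(R),\infty)$. Together these give $D(\lambda)\neq0$ for $|\lambda|>\lambda_*$, hence $\rho(T)\le\lambda_*$.
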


\noindent Thus the spectral radius is not only part of the spectrum but is a positive real eigenvalue.

%%%%%%
\subsection{Simplicity of the eigenvalue under irreducibility}

We next show that the eigenspace associated with $\rho(T)$ is one-dimensional, and that $T$ has no other eigenvalue on the spectral circle. This is the direct consequence from primitivity, i.e. the Doeblin condition. Let $v \in E$ satisfy$|\lambda| = \rho(T)$.
\begin{equation}
    T v = \lambda v, \qquad |\lambda| = \rho(T).
\end{equation}
Taking modulus and using positivity of $T$,
\begin{equation}
    \rho(T) |v| = |\lambda v| = |T v| \leq T |v|
\end{equation}
Applying the eigenfunctional $\Phi \in E'_+$ from Section~\ref{s:characteristic} gives
\begin{equation}
    \Phi(T |v|) = \rho(T) \, \Phi(|v|),
\end{equation}
and hence,
\begin{equation}
    T |v| = \rho(T) \, |v|
\end{equation}
Since $T$ is irreducible and positive-improving, the eigenspace associated with a positive eigenvector is one-dimensional. Thus $|v| = c w$ for some $c > 0$, where $w$ is the eigenvector in \eqref{eq:p-eigenvector}. Writing $v = c e^{i\theta} w$ and substituting into $T v = \lambda v$, we obtain
\begin{equation}
    \rho(T) \, w = e^{-i\theta} \lambda w
\end{equation}
Hence $\lambda = \rho(T) e^{i\theta}$. Since $T$ is real and positive, the left-hand side is a positive multiple of $w$, so $\lambda$ is also real and positive. Thus we have the following proposition:

\begin{Lem} \label{lem:simple}
    The eigenvalue $\rho(T)$ is simple, and it is the only eigenvalue of $T$ with modulus $\rho(T)$.
\end{Lem}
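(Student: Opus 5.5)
We will prove Lemma~\ref{lem:simple} by splitting it into two claims: the eigenspace of $\rho(T)$ is one-dimensional, and no other eigenvalue lies on the circle $|\lambda|=\rho(T)$. Both will rest on the decomposition $T=\alpha P_0+R$, the characteristic function $D$ of Section~\ref{s:characteristic}, and the strict positivity of $\Phi$. We do not rely on the informal modulus argument above, because its step ``$\Phi(T|v|)=\rho(T)\Phi(|v|)$'' uses $\Phi$ as if it were a left eigenfunctional. Instead we use the left eigenfunctional $v_0=\Phi R_{\lambda_0}$ with $\lambda_0=\rho(T)>\rho(R)$, as provided by Lemma~\ref{lem:spectral_radius}.

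\textbf{Step 1: a strictly positive left eigenfunctional.} Put $\lambda_0=\rho(T)$ and $\psi:=\Phi R_{\lambda_0}\in E'_+$. Since $\lambda_0>\rho(R)$, the resolvent $R_{\lambda_0}=\sum_n\lambda_0^{-n-1}R^n\ge \lambda_0^{-1}I$ is positive. Hence $\psi[f]\ge\lambda_0^{-1}\Phi[f]>0$ for every $f\in E_+\setminus\{0\}$, so $\psi$ is strictly positive. Using $D(\lambda_0)=0$, i.e. $\alpha\Phi[R_{\lambda_0}u_0]=1$, we compute
\begin{equation}
\psi T=\Phi R_{\lambda_0}R+\alpha\,\Phi[R_{\lambda_0}u_0]\,\Phi=\Phi R_{\lambda_0}R+\Phi=\Phi R_{\lambda_0}(R+\lambda_0 I-R)=\lambda_0\psi .
\end{equation}

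\textbf{Step 2: the peripheral modulus argument.} Let $Tv=\lambda v$ with $|\lambda|=\lambda_0$ and $v\neq0$. Positivity of $T$ gives $\lambda_0|v|\le T|v|$, so $T|v|-\lambda_0|v|\ge0$. Applying $\psi$ to this vector gives $\psi[T|v|-\lambda_0|v|]=0$, and strict positivity of $\psi$ then forces $T|v|=\lambda_0|v|$.

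\textbf{Step 3: one-dimensionality.} Let $h$ be any real eigenvector, $Th=\lambda_0h$. Then $(\lambda_0I-R)h=\alpha u_0\Phi[h]$, and since $\lambda_0\in\rho(R)$ we get $h=\alpha\Phi[h]R_{\lambda_0}u_0$. So every eigenvector is a multiple of $R_{\lambda_0}u_0$, which is exactly the $w$ of Theorem~\ref{thm:eigenfunction}. This argument works equally for complex $h$, so $\ker(\lambda_0I-T)=\operatorname{span}\{w\}$ and the geometric multiplicity is one. Algebraic simplicity follows from Corollary~\ref{cor:R_1}, which shows the pole is simple with rank-one residue, because $D'(\lambda_0)=\alpha\Phi[R_{\lambda_0}^2u_0]>0$.

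\textbf{Step 4: excluding other peripheral eigenvalues.} This is the main obstacle. The same computation as in Step 3 shows that any eigenvector $v$ for $\lambda$ with $|\lambda|=\lambda_0$ satisfies $v=\alpha\Phi[v]R_\lambda u_0$ with $D(\lambda)=0$, where $\lambda\in\rho(R)$ because $|\lambda|>\rho(R)$. We then compare
\begin{equation}
|\alpha\Phi[R_\lambda u_0]|\le\alpha\sum_n\lambda_0^{-n-1}\Phi[R^nu_0]=1 .
\end{equation}
Equality requires the complex numbers $\lambda^{-n-1}\Phi[R^nu_0]$ to share a common phase. The $n=0$ term $\lambda^{-1}\Phi[u_0]$ is nonzero, because $u_0\neq0$ and $\Phi$ is strictly positive. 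Given this, we aim to show that at least one further term $\Phi[R^nu_0]$ is nonzero and to conclude from this that $\lambda^{-1}$ is real and positive, hence $\lambda=\lambda_0$.

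The delicate case is $R u_0=0$ and more generally $\Phi[R^nu_0]=0$ for all $n\ge1$. There we would instead use Steps 2 and 3: $|v|=cw$, so $v=e^{i\theta}|v|$ times a unimodular phase function. From $T|v|=|Tv|$ and the positivity-improving lower bound $Tf\ge\alpha u_0\Phi[f]$, we expect to force that phase to be constant, and then $\lambda=\lambda_0$ follows as in the text.
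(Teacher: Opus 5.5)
Your Steps 1--3 are correct, and they take a more careful route than the paper. The paper applies $\Phi$ to $T|v|-\rho(T)|v|$ as if $\Phi$ were a left eigenfunctional, cites irreducibility for one-dimensionality, and then writes $v=ce^{i\theta}w$ without showing that the phase is constant. You instead use the genuine left eigenfunctional $\psi=\Phi R_{\lambda_0}$, read off the eigenspace from $h=\alpha\Phi[h]R_{\lambda_0}u_0$, and get algebraic simplicity from $D'(\lambda_0)>0$. All of this rests on $\rho(R)<\rho(T)$ from Lemma~\ref{lem:spectral_radius}.

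The gap is Step~4, which carries the second half of the lemma and is left as an intention. The plan you state there would not close it. If $\lambda^{-1}\Phi[u_0]$ and $\lambda^{-m-1}\Phi[R^m u_0]$ share a phase, you only learn $\lambda^{m}>0$, i.e.\ $\lambda\in\{\lambda_0\zeta:\zeta^m=1\}$, not $\lambda=\lambda_0$. Your fallback for the ``delicate case'' only expects the phase of $v$ to be constant, which is exactly the unproved step in the paper.

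The missing observation is that equality in the triangle inequality forces the common phase to be that of the sum. Put $z_n:=\lambda^{-n-1}\Phi[R^nu_0]$.
\begin{itemize}
\item $\sum_n z_n=\Phi[R_\lambda u_0]=1/\alpha$, because $D(\lambda)=0$.
\item $\sum_n|z_n|=\Phi[R_{\lambda_0}u_0]=1/\alpha$, because $|\lambda|=\lambda_0$, $\Phi[R^nu_0]\ge0$ and $D(\lambda_0)=0$.
\item Summing $\operatorname{Re} z_n\le|z_n|$ over $n$ therefore gives equality, hence $\operatorname{Re} z_n=|z_n|$, i.e.\ $z_n\ge0$, for every $n$.
\end{itemize}
Since $z_0=\lambda^{-1}\Phi[u_0]\neq0$, you get $\lambda^{-1}>0$, so $\lambda=|\lambda|=\lambda_0$. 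No case distinction is needed. With this, Steps~3--4 alone prove the lemma from the scalar function $D$, and Steps~1--2 become superfluous.

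If you would rather keep the modulus route, you must actually prove the constant-phase claim. Steps~1--2 give $|Tv|=\lambda_0|v|=T|v|$. Evaluate this at a point $x$ with $u_0(x)>0$, where $K(x,\cdot)\ge\alpha u_0(x)g>0$ a.e. Equality in $\bigl|\int K(x,y)v(y)\,d\mu(y)\bigr|=\int K(x,y)|v(y)|\,d\mu(y)$ then forces the phase of $v$ to be a.e.\ constant on $\{v\neq0\}$. Writing $v=e^{i\theta}|v|$ gives $Tv=\lambda_0 v$, hence $\lambda=\lambda_0$.
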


\noindent We emphasize that the uniqueness of the peripheral eigenvalue relies on the primitive (one-step Doeblin) condition. For power-Doeblin and related weakenings, the dominant spectral structure persists while the peripheral spectrum may be nontrivial; see Appendix~B.

%%%%%
\subsection{Relation with the resolvent decomposition}

Section~\ref{s:characteristic} shows that the resolvent of $T$ can be written as
\begin{equation} \label{eq:resolvent}
    (\lambda I - T)^{-1} = R_\lambda + \frac{\alpha (R_\lambda u_0) \otimes (\Phi R_\lambda)}{1 - \alpha \Phi[R_\lambda u_0]},
\end{equation}
where
\begin{equation} \label{eq:D}
    D(\lambda) := 1 - \alpha \Phi[R_\lambda u_0].
\end{equation}
A pole of the resolvent can occur only where $D(\lambda) = 0$. Such a pole corresponds to an isolated eigenvalue of $T$, and the residue gives the spectral projection. Since it is shown above that a unique eigenvalue lies on the spectral circle and equals $\rho(T)$, it follows

\begin{Prop} \label{prop:uniqueness}
    The dominant eigenvalue $\rho(T)$ is characterized uniquely by
    \begin{equation}
        D(\rho(T)) = 0.
    \end{equation}
    At this value, the resolvent has a simple pole, and the residue equals the rank-one projection onto the eigenspace spanned by $w$:
    \begin{equation} \label{eq:residue}
        \text{Res}_{\lambda=\rho(T)} ( \lambda I - T)^{-1} = w \otimes \Phi,
    \end{equation}
    with normalization $\Phi[w] = 1$.
\end{Prop}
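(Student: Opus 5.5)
The plan is to combine the scalar reduction of Proposition~\ref{prop:resolvent_decomposition} with the monotonicity of $D$ on $(\rho(R),\infty)$ and with Lemmas~\ref{lem:existence}--\ref{lem:simple}.

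\textbf{Step 1: $D(\rho(T))=0$.} By Lemma~\ref{lem:spectral_radius}, $\rho(R)<\rho(T)$, so $\rho(T)\in\rho(R)$ and $D$ is analytic near $\rho(T)$. By Lemma~\ref{lem:existence}, $\rho(T)$ is an eigenvalue, so $\rho(T)I-T$ is not invertible. If $D(\rho(T))\neq 0$, then \eqref{eq:resolvent} would give a bounded inverse, which is impossible. Hence $D(\rho(T))=0$.

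\textbf{Step 2: uniqueness.} For real $\lambda>\rho(R)$ we have $D'(\lambda)=\alpha\Phi[R_\lambda^2u_0]$. Since $R_\lambda\ge 0$ and $u_0\ge 0$, $u_0\neq0$, the Neumann series gives $R_\lambda^2u_0\ge \lambda^{-2}u_0$. Strict positivity of $\Phi$ then yields $D'(\lambda)>0$. So $\rho(T)$ is the unique real zero of $D$ in $(\rho(R),\infty)$. If a non-real zero $\lambda\in\rho(R)$ of $D$ with $|\lambda|\ge\rho(T)$ existed, the vector $R_\lambda u_0$ would be an eigenvector of $T$, because the computation in the proof of Theorem~\ref{thm:eigenfunction} works at any zero. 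That eigenvalue would have modulus at least $\rho(T)$, which forces $|\lambda|=\rho(T)$ and then contradicts Lemma~\ref{lem:simple}. Thus $\rho(T)$ is the only zero of $D$ on $\{|\lambda|\ge\rho(T)\}$, and it is characterized by $D(\rho(T))=0$.

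\textbf{Step 3: simple pole and residue.} Since $D'(\rho(T))>0$, Corollary~\ref{cor:R_1} applies with $\lambda_0=\rho(T)$. The resolvent has a simple pole there, with rank-one residue
\begin{equation}
Q=\frac{\alpha\,(R_{\lambda_0}u_0)\otimes(\Phi R_{\lambda_0})}{D'(\lambda_0)}.
\end{equation}
It remains to rewrite $Q$ as $w\otimes\Phi$ with $\Phi[w]=1$. The range of $Q$ is spanned by $R_{\lambda_0}u_0$, which is a positive multiple of the Perron vector $w$ by Theorem~\ref{thm:eigenfunction} and Lemma~\ref{lem:simple}. Since $Q$ is a projection, $Q^2=Q$, and Lemma~\ref{lem:simple} makes the eigenspace one-dimensional, it suffices to normalize $w$ and check how $Q$ acts on the dual side.

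\textbf{Main obstacle.} The difficulty is the identification of the left factor: $\Phi R_{\lambda_0}$ must be proportional to $\Phi$. In general $\Phi R_{\lambda_0}$ is the left eigenfunctional $v_0$, and this coincides with $\Phi$ only when $\Phi$ is itself a left eigenfunctional of $T$, that is, $\Phi T=\rho(T)\Phi$. The phrase ``eigenfunctional $\Phi$'' used in Section~\ref{s:spectral_dominance} suggests the authors intend exactly this. I would first try to derive it from $\Phi R_{\lambda_0}=(\lambda_0-\rho(T))^{-1}\Phi$ under the assumption that $\Phi$ is a left eigenvector. Failing that, I would interpret \eqref{eq:residue} in the normalized form $w\otimes v_0$ with $v_0[w]=1$, and note that it reduces to $w\otimes\Phi$ precisely in the eigenfunctional case.
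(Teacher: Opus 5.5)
Your Steps 1 and 2 are correct. They follow the paper's own logic, which is only the remark that poles of $(\lambda I-T)^{-1}$ can sit only at zeros of $D$, combined with Lemma~\ref{lem:simple}. You make that logic precise with three ingredients:
\begin{itemize}
\item the factorization $\lambda I-T=(\lambda I-R)\bigl(I-\alpha (R_\lambda u_0)\otimes\Phi\bigr)$ for the invertibility direction;
\item the bound $D'(\lambda)=\alpha\Phi[R_\lambda^2u_0]\ge\alpha\lambda^{-2}\Phi[u_0]>0$ on $(\rho(R),\infty)$;
\item the identity $(\lambda I-T)R_\lambda u_0=D(\lambda)\,u_0$, which makes $R_\lambda u_0$ an eigenvector at \emph{every} zero of $D$.
\end{itemize}
One small fix: in Step 2 write ``zero $\lambda\neq\rho(T)$'' instead of ``non-real zero''. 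The point $\lambda=-\rho(T)$ is not covered by monotonicity, but the same eigenvector argument excludes it.

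The obstacle you isolate in Step 3 is genuine, and it cannot be closed: \eqref{eq:residue} is false under the paper's hypotheses, and the paper gives no argument for it.
\begin{itemize}
\item The residue $Q$ of Corollary~\ref{cor:R_1} has kernel $\ker(\Phi R_{\lambda_0})$, and $\Phi R_{\lambda_0}$ is a left eigenfunctional of $T$, since $\Phi R_{\lambda_0}T=\alpha\Phi[R_{\lambda_0}u_0]\,\Phi+\Phi R_{\lambda_0}R=\Phi+(\lambda_0\Phi R_{\lambda_0}-\Phi)=\lambda_0\Phi R_{\lambda_0}$.
\item The operator $w\otimes\Phi$ has kernel $\ker\Phi$. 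So $Q=w\otimes\Phi$ holds if and only if $\Phi\propto\Phi R_{\lambda_0}$, i.e.\ if and only if $\Phi T=\rho(T)\Phi$. Assumption~\ref{assump:doeblin} does not imply this.
\item Concretely, take $T=\begin{pmatrix}2&1\\1&1\end{pmatrix}$, $u_0=(1,1)^{\top}$, $g=(1,1)$, $\alpha=1$. Then $R=\mathrm{diag}(1,0)$, $D(\lambda)=1-\frac{1}{\lambda-1}-\frac{1}{\lambda}$, and $\lambda_0=\frac{3+\sqrt5}{2}$. Since $T$ is symmetric, the residue is the orthogonal projection onto $w=(1,\frac{\sqrt5-1}{2})^{\top}$, which is not $w\otimes\Phi$ for any scaling of $w$.
\end{itemize}
The paper silently treats $\Phi$ as a left eigenfunctional: it speaks of ``applying the eigenfunctional $\Phi$'' in Section~\ref{s:spectral_dominance}, and writes $\Phi A=\rho(A)\Phi$ in Corollary~\ref{cor:PF-primitive}. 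That is an extra hypothesis, so your first option is not a derivation.

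Your fallback is the correct statement. With $w=\alpha R_{\lambda_0}u_0$ (so $\Phi[w]=1$), one has $Q=w\otimes v_0$, where $v_0=\Phi R_{\lambda_0}/D'(\lambda_0)$ and $v_0[w]=1$.

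Under the extra hypothesis, fix your constant: your $(\lambda_0-\rho(T))^{-1}$ has a zero denominator. From $\Phi R=(\lambda_0-\alpha\Phi[u_0])\Phi$ you get $\Phi R_{\lambda_0}=(\alpha\Phi[u_0])^{-1}\Phi$ and $D'(\lambda_0)=(\alpha\Phi[u_0])^{-1}$. Hence $Q=\alpha(R_{\lambda_0}u_0)\otimes\Phi=w\otimes\Phi$, exactly as stated.

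Finally, the same tacit assumption appears in the paper's proof of Lemma~\ref{lem:simple}, on which your Step 2 relies. It is repaired by using $v_0$ in place of $\Phi$ there; $v_0$ is strictly positive because $\Phi R_{\lambda_0}\ge\lambda_0^{-1}\Phi$. There is no circularity, since your Step 1 does not use Lemma~\ref{lem:simple}.
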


\noindent Combining Lemmas~\ref{lem:existence} and~\ref{lem:simple} with Proposition~\ref{prop:uniqueness} yields a complete characterization of the dominant spectral value of $T$ under the Doeblin condition. In particular, all Perron–-Frobenius-type conclusions used in this paper are consequences of the Doeblin condition and serve only as input for the characterization in Section~\ref{s:characteristic}.

%%%%%%%%%%
\section{Kernel-Level Resolvent Decomposition} \label{s:BS_expansion}

The decomposition in Section~\ref{s:characteristic} expresses the resolvent of $T$ explicitly at the operator level. In this section we develop a kernel-level version of the decomposition, based on recursively defined corrected kernels. This yields a Neumann-type series for the resolvent $(\lambda I - T)^{-1}$ but with precise corrections accounting for the rank-one perturbation.

We define $\Phi_\xi \in E'_+$ for $g_0 \in E^\times$ by
\begin{equation}
    \Phi_\xi(f(\xi,y)) := \int_{\Omega^d} g_0(\xi) \, f(\xi,y) \; d\mu(\xi)
\end{equation}
where it might be abuse of notation. The rank-one kernel then has the representation
\begin{equation}
    (P F)(x,y) = \alpha u_0(x) \Phi_\xi(F(\xi,y)), \qquad P = \alpha u_0 \otimes \Phi_\xi
\end{equation}
For $n \geq 1$, define the iterated kernels $K^{(n)}$ by
\begin{equation} \label{it1}
    K^{(1)}(x,y):=K(x,y), \qquad K^{(n+1)}(x,y) := \int_X K(x,\xi) \, K^{(n)}(\xi,y) \; d\mu(\xi).
\end{equation}
Then, for every $n \geq 1$,
\begin{equation}
    (T^{n}f)(x) = \int_X K^{(n)}(x,y) \, f(y) \; d\mu(y).
\end{equation}

%%%%%
\subsection{Recursive definition of the corrected kernels} \label{ss:corrected}
We introduce the corrected kernels that remove the rank-one contribution at each iterate. To be more precise, the decomposition given in Proposition~\ref{prop:resolvent_decomposition} isolates the rank-one contribution of the resolvent at the operator level. To obtain a kernel-level expansion, it is therefore natural to remove this rank-one component at each iteration of the kernel. The corrected kernels introduced below implement this cancellation recursively.

\begin{Def}[Corrected kernel recursion]
    \rm Set
    \begin{equation}
        \Gamma_0 := K,
    \end{equation}
    and for $n \geq 1$ define recursively
    \begin{equation} \label{eq:gamma-corrected}
        \Gamma_n(x,y) := \int_X K(x,\xi) \, \Gamma_{n-1}(\xi, y) \; d\mu(\xi) - \alpha u_0(x) \, \Phi_\xi(\Gamma_{n-1}(\xi,y)).
    \end{equation}
\end{Def}

\noindent When we denote by
\begin{equation} \label{eq:corrected-kernel}
    R(x,y) := K(x,y) - \alpha u_0(x) g_0(y)
\end{equation}
we can simplify the recursion as
\begin{equation}
    \Gamma_n(x,y) = \int_X R(x,\xi) \, \Gamma_{n-1}(\xi,y) \; d\mu(\xi)
\end{equation}
which is the kernel-level expression of $\Gamma_n = R \, \Gamma_{n-1}$. Therefore we have the operator-level recursion:
\begin{equation} \label{eq:recursion}
    \Gamma_{n+1} = (T - P) \Gamma_{n}, \qquad n \geq 0.
\end{equation}
This recursion expresses the idea that $\Gamma_n$ is the kernel of $(T - P)^n \, K$, that is, the usual iterates of $T$, but corrected at each step by subtracting the rank-one component.

\begin{Thm} \label{thm:Bell}
    Let $T$ be a bounded positive operator possible to decompose into rank-one $P$ and the remainder $R$. For $\Gamma_n = R \, \Gamma_{n-1} = (T - P) \Gamma_n$, we have the following expression 
    \begin{equation}
        \Gamma_n = K^{(n)} - \sum_{\ell=0}^{n-1} (-1)^\ell \sum_{k=0}^{n-\ell-1} K^{(n-k-\ell-1)} \; B_{\ell+1,k+\ell+1}(b_1,b_2,\cdots).
    \end{equation}
    where $b_j := \Phi_\xi(K^{(j)}u_0)$ and $B_{p,q}(b_1,b_2,\cdots)$ are the partial Bell polynomials with parameter $p,q$.
\end{Thm}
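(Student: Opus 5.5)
The plan is to prove the identity at the level of generating functions in $z=\lambda^{-1}$, using the Sherman--Morrison factorization of Proposition~\ref{prop:resolvent_decomposition} with the roles of $T$ and $R$ reversed. I then read off the coefficient of $z^n$. Partial Bell polynomials enter through the coefficients of powers of a power series.

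\textbf{Step 1 (generating functions).} For $|z|$ small, so that both Neumann series converge by boundedness of $T$ and $R$, set
\begin{equation}
G_R(z)=\sum_{n\ge0} z^n (T-P)^n = (I-zR)^{-1}, \qquad G_T(z)=\sum_{n\ge0} z^n T^n=(I-zT)^{-1}.
\end{equation}
Since $R = T - P$ with $P=\alpha u_0\otimes\Phi_\xi$ rank-one, Lemma~\ref{lem:rank-one} applied to $I-zR=(I-zT)(I+zG_T(z)P)$ gives
\begin{equation}
G_R(z)=G_T(z)-\frac{z\,\alpha\,(G_T(z)u_0)\otimes(\Phi_\xi G_T(z))}{1+z\alpha\,\Phi_\xi[G_T(z)u_0]}.
\end{equation}
Applying this to the kernel $K$, i.e.\ $\Gamma_n=(T-P)^nK$, turns the operator iterates $T^{j}$ into the kernels $K^{(j+1)}$ via \eqref{it1}. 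The denominator becomes $1+\beta(z)$ with $\beta(z)=\sum_{j\ge0}\alpha\, b_j z^{j+1}$, where $b_j=\Phi_\xi(K^{(j)}u_0)$ (I will absorb $\alpha$ and the index shift into the $b_j$, following the paper's convention $P=\alpha u_0\otimes\Phi_\xi$).

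\textbf{Step 2 (expansion).} Write
\begin{equation}
\frac{1}{1+\beta}=\sum_{\ell\ge0}(-1)^\ell\beta^\ell,
\end{equation}
and expand $\beta(z)^{\ell+1}$ as a power series. Its coefficient of $z^{q}$ is a sum over compositions of $q$ into $\ell+1$ parts of products $b_{j_1}\cdots b_{j_{\ell+1}}$. Through the standard identity
\begin{equation}
\Bigl(\sum_{j\ge1} x_j \tfrac{z^j}{j!}\Bigr)^{m} = m!\sum_{q\ge m} B_{q,m}(x_1,x_2,\dots)\,\tfrac{z^q}{q!},
\end{equation}
this coefficient is a partial Bell polynomial $B_{\cdot,\ell+1}$ in suitably rescaled arguments. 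The rescaling is $x_j=j!\,b_j$; equivalently, the paper's normalization of $B_{p,q}$ is understood to absorb these factorials. Multiplying by the remaining factor $G_T(z)$ (which contributes $K^{(m)}$) and collecting the coefficient of $z^n$, I obtain a double sum over $\ell$ (the number of rank-one insertions, with sign $(-1)^\ell$) and $k$ (the total degree carried by the scalars). The leftover power is $K^{(n-k-\ell-1)}$, which matches the displayed form.

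\textbf{Step 3 (cross-check).} I will verify the formula independently by expanding the noncommutative product $(T-P)^n$ into words. Any word containing $P$ collapses, since $P T^{j} P = \alpha\,\Phi_\xi[T^j u_0]\,P$. This confirms the combinatorial weights, and it also checks small cases $n=1,2$ against the recursion \eqref{eq:gamma-corrected}.

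\textbf{Main obstacle.} I expect the hardest part to be the index bookkeeping: reconciling the ordinary-power coefficients of $\beta^{\ell+1}$ with the exponential convention of partial Bell polynomials, and matching exactly the indices $B_{\ell+1,k+\ell+1}$ and $K^{(n-k-\ell-1)}$ as stated. As written, the order of indices in $B_{\ell+1,k+\ell+1}$ must be interpreted as (number of blocks, total degree), possibly with the roles of the two indices swapped. I will fix the convention by testing $n=1,2$ and then carry out the coefficient extraction carefully. Convergence itself is unproblematic, since all identities are between formal power series whose coefficients are finite sums.
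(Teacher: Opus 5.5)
Your Sherman--Morrison factorization of $(I-zR)^{-1}$ is correct, but Step 2 fails because you apply it to the kernel $K$, i.e.\ with $\Gamma_0=K$ and $\Gamma_n=(T-P)^nK$. In that case
\[
\sum_{n\ge0} z^n\Gamma_n = G_T(z)K-\frac{z\alpha\,(G_T(z)u_0)(x)\,\Phi_\xi\bigl[(G_T(z)K)(\xi,y)\bigr]}{1+\beta(z)},
\]
and the correction term contains two different series: $G_T(z)u_0=\sum_m z^mT^mu_0$, and the $y$-dependent $\Phi_\xi[G_T(z)K]=\sum_m z^m\Phi_\xi(K^{(m+1)}(\xi,\cdot))$. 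Its $z^n$-coefficient is therefore a combination of rank-one kernels $(T^au_0)(x)\,\Phi_\xi(K^{(c)}(\xi,y))$ with $c\ge1$. It is not a combination of the kernels $K^{(n-k-\ell-1)}(x,y)$, and there is no ``remaining factor $G_T(z)$ contributing $K^{(m)}$''. The leading term is also $K^{(n+1)}$, not $K^{(n)}$. Already at $n=1$ you get $\Gamma_1=K^{(2)}-\alpha u_0(x)\Phi_\xi(K(\xi,y))$, against the displayed $K^{(1)}-K^{(0)}b_1$. So your Step 3 check would not just fix an index convention; no convention reconciles the two. The paper's word-expansion proof has the same defect: it treats $\Phi_\xi(T^jK)$, a function of $y$, as the scalar $b_j$. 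Words collapse to scalars only when the rightmost factor is $u_0$.

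The missing idea is to use the renewal recursion of Section 3, $\Gamma_0=u_0$, $\Gamma_n=R^nu_0$. Applied to $u_0$, your formula collapses, because the $\Phi$-factor in the numerator is exactly the series in the denominator. Set $\beta(z)=z\alpha\Phi[G_T(z)u_0]=\sum_{j\ge1}\tilde b_jz^j$ with $\tilde b_j:=\alpha\Phi[T^{j-1}u_0]$. Then
\[
\sum_{n\ge0}z^n\Gamma_n=G_T(z)u_0\Bigl(1-\frac{\beta(z)}{1+\beta(z)}\Bigr)=\frac{G_T(z)u_0}{1+\beta(z)},\qquad \frac{1}{1+\beta}=1-\sum_{\ell\ge0}(-1)^\ell\beta^{\ell+1}.
\]
Since $\beta=O(z)$, we have $[z^{k+\ell+1}]\beta^{\ell+1}=\sum\tilde b_{j_1}\cdots\tilde b_{j_{\ell+1}}$, summed over $j_i\ge1$ with $j_1+\dots+j_{\ell+1}=k+\ell+1$. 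This is precisely the paper's $B_{\ell+1,k+\ell+1}(\tilde b_1,\tilde b_2,\dots)$. The paper defines $B_{p,q}$ as $\sum\frac{p!}{k_1!k_2!\cdots}\prod b_j^{k_j}$ over $\sum_j k_j=p$, $\sum_j jk_j=q$, which is the ordinary coefficient of $z^q$ in $(\sum_j b_jz^j)^p$. So no $j!$ rescaling is needed; the exponential identity you quote would in any case also produce a prefactor $p!/q!$. Multiplying by $[z^{n-k-\ell-1}]G_T(z)u_0=T^{n-k-\ell-1}u_0$ gives the displayed formula exactly, under two readings. First, $K^{(m)}$ means $T^mu_0$, so $K^{(0)}=u_0$. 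Second, the arguments are $\tilde b_j=\alpha\Phi(K^{(j-1)}u_0)$ rather than $\Phi(K^{(j)}u_0)$; for example $\Gamma_1=Tu_0-\alpha\Phi[u_0]u_0$. This is the reading in which the identity is actually true. With that correction, your generating-function route is a complete proof, and it handles signs and index bookkeeping more cleanly than the paper's count of ordered words.
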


\begin{Rem}
    \rm The simplest is the case that the integral kernel has the separable expression $K(x,y) = v(x) \, w(y)$. The expression~\eqref{eq:corrected-kernel} implies
    \begin{equation}
        K(x,y) = \alpha u_0(x) \, g_0(y) + R(x,y),
    \end{equation}
    which is the kernel-level decomposition comparing to the operator-level decomposition \eqref{eq:decomposition_T}.
\end{Rem}

%%%%%%
\subsection{Expansion of \texorpdfstring{$\Gamma_n$}{} and Bell polynomial structure} \label{ss:Bell}

In this section we show the proof of Theorem~\ref{thm:Bell}. Iterating \eqref{eq:recursion} gives
\begin{equation} \label{eq:iteration}
    \Gamma_n = (T - P)^n \, K.
\end{equation}
Expanding the product leads to alternating sums of the form $T^{i_1} \, P T^{i_2} \, P \cdots P \, T^{i_p} \, K$, with $p$ occurrences of $P$ and total exponent sum $i_1 + i_2 + \cdots + i_p = q$. Using the kernel representation
\begin{equation}
    P \, T^j \, K = w \, \Phi_\xi(T^j \, K),
\end{equation}
each block contributes a scalar term $b_j := \Phi_\xi(K^{(j)}u_0)$, and the sum over all placements of $p$ blocks of length $i_1, i_2, \cdots, i_p$ gives
\begin{equation}
    \sum_{i_1+i_2+\cdots+i_p=q} \; \prod_{j=1}^p b_{i_j}.
\end{equation}
Ignoring the ordered structure and counting only multiplicities of block lengths produces the partial Bell polynomial
\begin{equation} \label{eq:Bell_polynomial}
    B_{p,q}(b_1,b_2,\cdots) = \sum_{\substack{k_1+k_2+\cdots=p\\k_1+2k_2+\cdots=q}} \; \frac{p!}{k_1! \, k_2! \cdots} \; \prod_{j=1}^p b_j^{k_j}.
\end{equation}
Thus the kernel $\Gamma_n$ admits the representation
\begin{equation}
    \Gamma_n = K^{(n)} - \sum_{\ell=0}^{n-1} (-1)^\ell \sum_{k=0}^{n-\ell-1} K^{(n-k-\ell-1)} \; B_{\ell+1,k+\ell+1}(b_1,b_2,\cdots).
\end{equation}
Although the combinatorics is not essential for the resolvent expansion, the Bell-polynomial structure confirms that the recursive definition \eqref{eq:gamma-corrected} introduces no contradictions and yields the correct cancellation of rank-one contributions.

The iteration \eqref{eq:iteration} also yields the kernel-level decomposition. We recall here that the operator $R_\lambda$ acts here on kernels via the lifted action described in Section~\ref{ss:corrected}, so that $R_\lambda K$ is well-defined as a kernel.

\begin{Cor} \label{thm:fredholm-corrected}
    Let $K$ be Schur-bounded, and let $\Gamma_n$ be defined by \eqref{eq:gamma-corrected}. For $|\lambda|$ sufficiently large, the series
    \begin{equation} \label{eq:Neumann}
        (\lambda I - R)^{-1} K = \sum_{n=1}^\infty \lambda^{-(n+1)} \, \Gamma_n
    \end{equation}
    converges absolutely in operator norm satisfies the identity
    \begin{equation} \label{eq:fk-identity-corrected}
        (\lambda I - T) \, R_\lambda = K - P R_\lambda, \qquad P = \alpha u_0 \otimes \Phi_\xi.
    \end{equation}
    Equivalently,
    \begin{equation}
        R_\lambda = (\lambda I - T)^{-1} \, (K - P R_\lambda).
    \end{equation}
\end{Cor}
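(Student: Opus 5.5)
We will prove Corollary~\ref{thm:fredholm-corrected} in two stages: first the convergence of the Neumann-type series \eqref{eq:Neumann}, then the identity \eqref{eq:fk-identity-corrected} obtained by telescoping. We work throughout with the lifted action of operators on kernels: an operator acts on $F(x,y)$ in the variable $x$, with $y$ frozen. Schur-boundedness of $K$ makes $T$ bounded on the kernel space, with norm $\|T\|_{\mathrm{Schur}}$. Because $u_0 \in E_+$ and $g_0 \in E^\times$, the rank-one kernel operator $P=\alpha u_0\otimes\Phi_\xi$ is also bounded there, with $\|P\|\le \alpha\|u_0\|_E\|g_0\|_{E^\times}$. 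Consequently $R=T-P$ is bounded, and $\|\Gamma_n\| = \|R^n K\| \le \|R\|^n\|K\|$ by the recursion \eqref{eq:recursion} and \eqref{eq:iteration}.

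\textbf{Convergence.} For $|\lambda| > \|R\|$, or indeed for $|\lambda|>\rho(R)$ via Gelfand's formula $\limsup\|R^n\|^{1/n}=\rho(R)$, the series $\sum_n \lambda^{-(n+1)} R^n K$ converges absolutely in norm. It equals $(\lambda I - R)^{-1}K = R_\lambda K$, by the standard Neumann identity $(\lambda I - R)\sum_{n\ge0}\lambda^{-(n+1)}R^n = I$, which we verify by telescoping the partial sums. The index convention needs care: the statement sums from $n=1$, while the term $\lambda^{-1}\Gamma_0$ is the free term. We will record this shift explicitly, reading $R_\lambda$ in \eqref{eq:fk-identity-corrected} as the kernel $\sum_{n\ge 0}\lambda^{-(n+1)}\Gamma_n$, and note that the $n=0$ term is what produces the $K$ on the right-hand side.

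\textbf{Identity.} Write $\mathcal R_\lambda := \sum_{n\ge0}\lambda^{-(n+1)}\Gamma_n$. Applying $\lambda I - R$ termwise is justified by absolute convergence and boundedness of $R$, and gives
\begin{equation}
(\lambda I - R)\mathcal R_\lambda = \sum_{n\ge0}\lambda^{-n}\Gamma_n - \sum_{n\ge0}\lambda^{-(n+1)}\Gamma_{n+1} = \Gamma_0 = K.
\end{equation}
Since $\lambda I - T = (\lambda I - R) - P$, it follows that $(\lambda I - T)\mathcal R_\lambda = K - P\mathcal R_\lambda$, which is \eqref{eq:fk-identity-corrected}. For $|\lambda|>\|T\|$ the operator $\lambda I - T$ is invertible, and applying $(\lambda I-T)^{-1}$ to both sides yields the equivalent form. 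As a consistency check, this form matches Proposition~\ref{prop:resolvent_decomposition}: since $P\mathcal R_\lambda = \alpha u_0\,\Phi_\xi(\mathcal R_\lambda)$ is rank one in $x$, the equation $\mathcal R_\lambda = (\lambda I-T)^{-1}K - \alpha \Phi_\xi(\mathcal R_\lambda)(\lambda I - T)^{-1}u_0$ can be solved for the scalar $\Phi_\xi(\mathcal R_\lambda)$ and reproduces the Sherman--Morrison formula \eqref{eq:sm-resolvent}.

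\textbf{Main obstacle.} The substantive point is functional-analytic rather than algebraic. We must make precise the Banach space of kernels, namely Schur-bounded kernels with the Schur norm, and check that $T$, $P$ and $R$ act boundedly on it. This requires $\sup_y\int u_0(x)\,dx$-type bounds as well as $\Phi_\xi$ being bounded uniformly in $y$. Our first attempt will be to impose that $u_0$ and $g_0$ are bounded and integrable. This makes $P$ Schur-bounded, after which everything above is routine.
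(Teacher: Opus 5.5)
Your proposal is correct and follows essentially the paper's telescoping argument. The paper applies $\lambda I - T$ termwise, uses $T\Gamma_n = \Gamma_{n+1} + P\Gamma_n$, and collects the $P$-terms into $P R_\lambda$; you instead telescope with $\lambda I - R$ down to $\Gamma_0 = K$ and subtract $P$ once. Your reading $R_\lambda = \sum_{n\ge 0}\lambda^{-(n+1)}\Gamma_n$ correctly repairs the statement's indexing: the paper's proof implicitly uses weights $\lambda^{-n}$ starting from $\Gamma_1 = K$, which is the same series after a shift. One simplification: on the lifted space $L^\infty(\Omega;E)$ of Appendix~\ref{s:kernel-rankone}, your bound $\|P\| \le \alpha\|u_0\|_E\|g_0\|_{E^\times}$ holds exactly as you wrote it, so the extra boundedness and integrability hypotheses on $u_0$ and $g_0$ that you contemplate for the Schur-norm space are not needed.
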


\begin{proof}
    Using $\Gamma_{n+1} = T \, \Gamma_{n} - P \, \Gamma_{n}$
    \begin{equation}
        (\lambda I - T) \, \lambda^{-n} \, \Gamma_n = \lambda^{-n} \, \Gamma_n - \lambda^{-n} \, T \Gamma_n
    \end{equation}
    But
    \begin{equation}
        T \Gamma_n = (T - P) \Gamma_n + P \Gamma_n = \Gamma_{n+1} + P \Gamma_n
    \end{equation}
    Thus
    \begin{equation}
        (\lambda I -T) \lambda^{-n} \, \Gamma_n = \lambda^{-n+1} \Gamma_n - \lambda^{-n} \, \Gamma_{n+1} + \lambda^{-n} \, P \Gamma_n.
    \end{equation}
    Summing over $n \geq 1$ gives a telescoping cancellation on the first two terms, leaving
    \begin{equation}
        (\lambda I - T) R_\lambda = \Gamma_1 - P R_\lambda = K - P R_\lambda.
    \end{equation}
    This proves \eqref{eq:fk-identity-corrected}.
\end{proof}

\noindent The term $K - P R_\lambda$ can be interpreted as a corrected kernel from which the rank-one feedback induced by the Doeblin minorization has been removed. This subtraction implements, at the kernel level, the decomposition of the resolvent and yields a renewal-type recursion for the corrected kernels $\Gamma_n$.

%%%%%%%%%%
\appendix

%%%%%%%%%%
\section{Change of Measure} \label{s:change_meas}

Let $(X,\mathcal F,\nu)$ be a $\sigma$-finite measure space and let 
\begin{equation}
    d\mu(x) = h(x) \, d\nu(x), \qquad h(x)>0.
\end{equation}
For a nonnegative kernel $K(x,y)$ define
\begin{equation}
    (T_\mu f)(x) = \int_X K(x,y) f(y)\, d\mu(y), \qquad f\in L^p(X,\mu).
\end{equation}

%%%%%%
\subsection{Isometric conjugation}

We fix conjugate exponents $p, q \in [1,\infty]$ with $1/p + 1/q = 1$, so that all integrals below are well defined. Define the map
\begin{equation}
    (Mf)(x) = h(x)^{1/p} f(x).
\end{equation}
Then $M \colon L^p(X,\mu)\to L^p(X,\nu)$ is an isometric isomorphism.  
A direct computation shows
\begin{equation}
    (M T_\mu M^{-1} g)(x) = \int_X K_e(x,y)\, g(y)\, d\nu(y),
\end{equation}
where
\begin{equation}
    K_e(x,y) = h(x)^{1/p} K(x,y) h(y)^{1/q}.
\end{equation}
Hence
\begin{equation}
    T_\nu := M T_\mu M^{-1}
\end{equation}
has the same operator norm, spectrum, and positivity properties as $T_\mu$. Thus, changing the underlying measure simply conjugates the operator by an isometry.

%%%%%
\subsection{Invariance of Schur bounds}

Assume the Schur estimates hold on $(X,\mu)$:
\begin{equation}
    \int_X K(x,y)\,\psi(y)\, d\mu(y) \leq C\, \phi(x), \qquad \int_X K(x,y)\,\phi(x)\, d\mu(x) \leq C\, \psi(y).
\end{equation}
Define transformed weights
\begin{equation}
    \tilde{\phi}(x) = h(x)^{-1/p} \phi(x), \qquad \tilde{\psi}(x) = h(x)^{1/q} \psi(x).
\end{equation}
Then $K_e$ satisfies the same inequalities with respect to $\nu$:
\begin{equation}
    \int_X K_e(x,y)\,\tilde{\psi}(y)\, d\nu(y) \leq C\, \tilde{\phi}(x), \qquad \int_X K_e(x,y)\,\tilde{\phi}(x)\, d\nu(x) \leq C\, \tilde{\psi}(y).
\end{equation}
Thus Schur-type boundedness is invariant under changes of measure.

\medskip

\begin{Rem}
\rm From the operator-theoretic viewpoint, all positive, spectral, and Fredholm–-Krein properties are preserved when passing from $T_\mu$ to $T_\nu$.
\end{Rem}

%%%%%%%%%%
\section{Perron--Frobenius as a Corollary} \label{s:PF}

In this section we illustrate how the abstract theory developed above recovers the classical Perron--Frobenius theorem for primitive nonnegative matrices as a direct corollary.

%%%%%
\subsection{Perron--Frobenius theorem under Doeblin minorization}

Let $A = (a_{ij})_{1 \leq i,j \leq n}$ be a nonnegative $n\times n$ matrix. We consider $A$ as an operator on $\mathbb{R}^n$ equipped with the standard order and norm. Throughout, we identify $\mathbb{R}^n$ with the Banach lattice $E = \ell^p(\{1, \dots, n\})$ for some $1 \leq p \leq\infty$ using counting
measure.

\begin{Assump}[Discrete Doeblin minorization] \label{assump:PF-Doeblin}
    \rm There exist $\alpha > 0$, a vector $u \in \mathbb{R}^n$ with $u_i > 0$ for all $i$, and a vector $g \in \mathbb{R}^n$ with $g_j \geq 0$ such that
    \begin{equation} \label{eq:PF-Doeblin}
        a_{ij} \; \geq \; \alpha\, u_i\, g_j \qquad \forall\, 1 \leq i,j \leq n.
    \end{equation}
    Moreover, there exists a strictly positive linear functional $\Phi \colon \mathbb{R}^n \to \mathbb{R}$, i.e. \, $\Phi(x) > 0$ for all $x \in \mathbb{R}^n_+ \setminus \{0\}$.
\end{Assump}

In the discrete setting, \eqref{eq:PF-Doeblin} is exactly the Doeblin-type rank-one minorization from Assumption~1.1 applied to the counting measure on $\{1,\dots,n\}$, with kernel $K(i,j) = a_{ij}$.

\begin{Cor}[Perron--Frobenius for primitive matrices]
 \label{cor:PF-primitive}
    Let $A$ be a nonnegative $n \times n$ matrix satisfying Assumption~\ref{assump:PF-Doeblin}. Then the following hold:
    \begin{enumerate}
        \item The spectral radius $\rho(A)>0$ is an eigenvalue of $A$. There exists a vector $w \in \mathbb{R}^n$ with $w_i > 0$ for all $i$ such that
        \begin{equation}
            A w = \rho(A)\, w.
        \end{equation}
        \item The eigenspace associated with $\rho(A)$ is one-dimensional, and $\rho(A)$ is the only eigenvalue of $A$ with modulus $|\lambda| = \rho(A)$.
        \item The resolvent $(\lambda I - A)^{-1}$ has a simple pole at $\lambda = \rho(A)$, and the residue is the rank-one projection onto the Perron eigenspace:
        \begin{equation}
            \text{Res}_{\lambda=\rho(A)} (\lambda I - A)^{-1} = w\otimes \Phi,
        \end{equation}
        where $\Phi$ is a strictly positive left eigenfunctional
        satisfying $\Phi A = \rho(A)\Phi$ and normalized by $\Phi[w]=1$.
    \end{enumerate}
\end{Cor}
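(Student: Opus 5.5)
The plan is to reduce the matrix case to the abstract results for $E=\ell^p(\{1,\dots,n\})$ with counting measure and kernel $K(i,j)=a_{ij}$. Assumption~\ref{assump:PF-Doeblin} is then Assumption~\ref{assump:doeblin}, with $\Phi[x]=\sum_j g_j x_j$, or more generally the given strictly positive functional. Strict positivity of $\Phi$ forces $g_j>0$ for every $j$, since otherwise $\Phi[e_j]=0$. Because $u_i>0$, this gives $a_{ij}\ge \alpha u_i g_j>0$, so $A$ is entrywise positive. Finite dimensionality supplies every regularity hypothesis for free: $A$ is bounded, every operator is compact and hence quasi-compact, and $\ell^p$ on a finite set is order-continuous with $E'\cong E^\times$.

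\textbf{Step 1 (item 1).} Apply Lemma~\ref{lem:positive-improving}: $A$ is positivity improving and irreducible. Lemma~\ref{lem:existence} then gives $\rho(A)>0$ and an eigenvector $w\in E_+\setminus\{0\}$ with $Aw=\rho(A)w$. Writing $w=\rho(A)^{-1}Aw\ge \rho(A)^{-1}\alpha\,\Phi[w]\,u$, and using $\Phi[w]>0$ and $u_i>0$, we get $w_i>0$ for all $i$.

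\textbf{Step 2 (item 2).} This is Lemma~\ref{lem:simple}. Its argument passes from $\rho|v|\le A|v|$ to equality by applying a positive functional, and that step needs a strictly positive \emph{left eigenvector}. I would supply it by applying Step 1 to $A^{\mathsf T}$, which is also entrywise positive. This gives $\Phi_*>0$ with $\Phi_*A=\rho(A)\Phi_*$, and $\Phi_*(A|v|-\rho|v|)=0$ with $A|v|-\rho|v|\ge0$ forces $A|v|=\rho|v|$. For one-dimensionality: if $v$ is a real eigenvector, take $t$ maximal with $w-tv\ge0$. Then $w-tv$ vanishes in some coordinate, and positivity improvement forces $w-tv\equiv0$. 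The complex case follows by taking real and imaginary parts. For the peripheral claim: once $|v|=cw$, the equality $|Av|=A|v|$ for an entrywise positive matrix forces the entries of $v$ to share a common phase. Hence $v=ce^{i\theta}w$ and $\lambda=\rho(A)$.

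\textbf{Step 3 (item 3).} Use the Sherman--Morrison factorization of Proposition~\ref{prop:resolvent_decomposition} with $R=A-\alpha u\otimes\Phi\ge0$. Lemma~\ref{lem:spectral_radius} gives $\rho(R)<\rho(A)$ and $D(\rho(A))=0$. Since $D'(\lambda)=\alpha\Phi[R_\lambda^2u]>0$, the zero is simple, and Corollary~\ref{cor:R_1} yields a simple pole whose residue is the rank-one operator $\alpha(R_{\lambda_0}u)\otimes(\Phi R_{\lambda_0})/D'(\lambda_0)$. By Theorem~\ref{thm:eigenfunction}, its left factor is a multiple of $w$. The right factor $\psi:=\Phi R_{\lambda_0}$ is a left eigenvector, by the mirror of the computation in the proof of Theorem~\ref{thm:eigenfunction}: $\psi(\lambda_0-A)=\Phi-\alpha\Phi[R_{\lambda_0}u]\Phi=D(\lambda_0)\Phi=0$. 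Moreover $\psi$ is strictly positive, since $\psi\ge\lambda_0^{-1}\Phi$ by the positive Neumann series. Because the residue is idempotent, it is the projection $w\otimes\psi/\psi[w]$; rescale so that $\psi[w]=1$.

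\textbf{Main obstacle.} The hard point is the statement's identification of the left eigenfunctional with ``$\Phi$''. The $\Phi$ in Assumption~\ref{assump:PF-Doeblin} is an arbitrary strictly positive functional and is not in general a left eigenvector. I would therefore read item 3 with $\Phi$ denoting the normalized left Perron functional $\psi$ constructed in Step 3, and state this reinterpretation explicitly. I would also flag the same point in Lemma~\ref{lem:simple}, where the argument again needs the left eigenvector rather than the minorizing functional.
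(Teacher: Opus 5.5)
Your argument is correct. It has the same skeleton as the paper's proof: embed $A$ in the $\ell^p$ framework, then apply Lemma~\ref{lem:positive-improving}, Lemma~\ref{lem:existence}, Lemma~\ref{lem:simple} and the residue of Corollary~\ref{cor:R_1}. It departs from the paper exactly where the cited results have gaps. The paper treats the minorizing functional $\Phi$ as a left eigenfunctional. Lemma~\ref{lem:simple} pairs $T|v|\ge\rho(T)|v|$ with it. Proposition~\ref{prop:uniqueness}, and hence the paper's proof of item~3, gives the residue as $w\otimes\Phi$. That is false whenever $\Phi A\neq\rho(A)\Phi$: for instance $a_{11}=2$, $a_{12}=a_{21}=a_{22}=1$, $u=g=(1,1)$ gives $\Phi A=(3,2)$.

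Your changes fix each of these points. You obtain $A|v|=\rho(A)|v|$ from the Perron vector of $A^{\mathsf T}$. You get the common phase from the equality case of the triangle inequality, where the paper simply writes $v=ce^{i\theta}w$. You take $\psi=\Phi R_{\lambda_0}$ as the right factor of the residue, which agrees with $v_0=\Phi R_{\lambda_0}/D'(\lambda_0)$ in the paper's remark after Lemma~\ref{lem:spectral_radius}. The paper's route is shorter; yours actually goes through and yields an explicit left Perron functional. Your reading of item~3 is the only one under which it holds.

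Two small fixes:
\begin{itemize}
\item Drop the hedge ``or more generally the given strictly positive functional''. Your deductions $g_j>0$, $Aw\ge\alpha\Phi[w]u$ and $R\ge0$ all need $\Phi=\langle g,\cdot\rangle$. Without that link the assumption allows $g=0$, and $A=I$ becomes a counterexample.
\item Lemma~\ref{lem:spectral_radius} is safe to cite here, but only because two steps hold in this setting, and the paper's proof of that lemma does not supply either of them in general. First, $u,g>0$ give $\Phi[R_\lambda u]\ge\min_i u_i\,\min_j g_j\sum_{i,j}(R_\lambda)_{ij}\to\infty$ as $\lambda\downarrow\rho(R)$. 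Second, pairing the strictly positive eigenvector $R_{\lambda_*}u$ with your left Perron vector identifies the root $\lambda_*$ with $\rho(A)$.
\end{itemize}
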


\begin{proof}
    We view $A$ as the operator $T$ acting on $E = \ell^p (\{1,\dots,n\})$ with counting measure. Then Assumption~\ref{assump:PF-Doeblin} is exactly Assumption~\ref{assump:doeblin} in the present discrete setting, with kernel $K(i,j) = a_{ij}$, rank-one operator $P_0 = u \otimes \Phi$, and $T = A$. Since $E$ is finite-dimensional, quasi-compactness is automatic and the decomposition
    \begin{equation}
        T = \alpha P_0 + R
    \end{equation}
    holds with $R \geq 0$ as in Proposition~\ref{prop:quasi-compact}.

    \smallskip
    
    By Lemma~\ref{lem:positive-improving} (irreducibility under Doeblin minorization), $T$ is positive improving and irreducible. Hence Proposition~\ref{prop:resolvent_decomposition} applies and yields:
    \begin{itemize}
        \item the spectral radius $\rho(T)$ is a positive eigenvalue with a strictly positive eigenvector $w \in  E_+$;
        \item $\rho(T)$ is simple and the unique eigenvalue on the spectral circle $\{z \in \mathbb{C} \colon |z| = \rho(T)\}$;
        \item the corresponding spectral projection is $w \otimes \Phi$, where $\Phi$ is a strictly positive eigenfunctional normalized by $\Phi[w] = 1$.
    \end{itemize}

    \smallskip
    
    Since $T = A$, this is precisely the assertion of the corollary with $\rho(A) = \rho(T)$.
\end{proof}

%%%%%
\subsection{Power-Doeblin condition and Perron–Frobenius structure} \label{ss:power-doeblin}

In the previous sections we assumed a Doeblin-type rank-one minorization for the operator $T$ itself. In many discrete or Markovian settings, a natural weakening is to require a Doeblin condition for some power $T^N$. This is closely related to the classical Perron--Frobenius theory, where one works with powers of a nonnegative matrix.

\begin{Assump}[Power-Doeblin condition] \label{assump:power-Doeblin}
    \rm Let $T$ be a positive bounded operator on a Banach lattice $E$. We say that $T$ satisfies the \emph{power-Doeblin condition} if there exist $N \in \mathbb{N}$, $\alpha > 0$, $u_0 \in E_+$, and a strictly positive functional $\Phi \in E'_+$ such that
        \begin{equation} \label{eq:power-Doeblin}
            T^N f \; \geq \; \alpha \, u_0 \, \Phi[f] \qquad \text{for all } f\in E_+ \setminus\{0\}.
        \end{equation}
\end{Assump}

\noindent Thus $T^N$ satisfies the Doeblin-type minorization from Assumption~\ref{assump:doeblin}, and in particular $T^N$ is positive improving, irreducible and quasi-compact.

\begin{Cor}[Peripheral spectrum under the power-Doeblin condition] \label{cor:power-Doeblin-peripheral}
    Assume that $T$ is a positive bounded operator on $E$ and that Assumption~\ref{assump:power-Doeblin} holds. Then the following properties are satisfied:
    \begin{enumerate}
        \item The spectral radius $\rho(T) > 0$ is an eigenvalue of $T$.
        \item The operator $T^N$ has a unique dominant eigenvalue
        $\rho(T^N) = \rho(T)^N$, which is real, positive and algebraically simple. The corresponding eigenspace is one-dimensional and generated by a strictly positive vector $w\in E_+$.
        \item Every eigenvalue $\lambda$ of $T$ with modulus $|\lambda| = \rho(T)$ satisfies
        \begin{equation}
            \lambda^N \; = \; \rho(T)^N.
        \end{equation}
        In particular, the peripheral spectrum of $T$ is a finite subset of the set
        \begin{equation}
            \{\rho(T)\, \zeta \colon \zeta^N = 1\}.
        \end{equation}
        \item The spectral projection $P$ of $T^N$ associated with
        $\rho(T^N)$ is rank-one and can be written as
        \begin{equation}
            P = w \otimes \Phi_N,
        \end{equation}
        where $\Phi_N$ is a strictly positive eigenfunctional of $T^N$ normalized by $\Phi_N[w]=1$. Moreover, $P$ is also the sum of spectral projections of $T$ corresponding to the peripheral eigenvalues $\lambda$ with $|\lambda| = \rho(T)$ and $\lambda^N = \rho(T)^N$.
    \end{enumerate}
\end{Cor}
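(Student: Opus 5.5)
The plan is to reduce everything to the already-established one-step Doeblin theory, applied to $S := T^N$. By Assumption~\ref{assump:power-Doeblin}, $S$ satisfies Assumption~\ref{assump:doeblin} in its operator form \eqref{eq:doeblin_operator}. The remark following the assumption records that Lemma~\ref{lem:positive-improving} and Proposition~\ref{prop:quasi-compact} then make $S$ positive improving, irreducible and quasi-compact. I will therefore apply Lemma~\ref{lem:existence}, Lemma~\ref{lem:simple} and Proposition~\ref{prop:uniqueness} verbatim to $S$. I will also use the spectral mapping theorem, $\sigma(T^N)=\{\lambda^N:\lambda\in\sigma(T)\}$, which gives $\rho(T^N)=\rho(T)^N$.

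Item 2 follows from Lemmas~\ref{lem:existence} and~\ref{lem:simple} for $S$. These give an eigenvector $w\in E_+\setminus\{0\}$ with $Sw=\rho(T)^N w$, show the eigenspace is one-dimensional, and show no other eigenvalue of $S$ has modulus $\rho(T)^N$. Strict positivity of $w$ comes from positivity improvement, since $w=\rho(T)^{-N}Sw>0$. Algebraic simplicity comes from the simple pole in Corollary~\ref{cor:R_1} and Proposition~\ref{prop:uniqueness}, with $D'>0$ by the monotonicity computed in Lemma~\ref{lem:spectral_radius}.

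For item 1, I note that $Tw$ is again a positive eigenvector of $S$ for $\rho(T)^N$, because $S$ commutes with $T$. By one-dimensionality, $Tw=cw$ with $c\ge0$. Then $c^N=\rho(T)^N$ forces $c=\rho(T)$, and $\rho(T)>0$ because $\rho(S)>0$.

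Item 3 follows the same way. If $Tv=\lambda v$ with $|\lambda|=\rho(T)$, then $Sv=\lambda^N v$ and $|\lambda^N|=\rho(S)$. Lemma~\ref{lem:simple} for $S$ forces $\lambda^N=\rho(T)^N$, so the peripheral point spectrum lies in $\{\rho(T)\zeta:\zeta^N=1\}$, which is finite. To cover the whole peripheral spectrum and not just its eigenvalues, I will use quasi-compactness of $S$. It gives $r_{\rm ess}(T)^N=r_{\rm ess}(S)<\rho(S)$, so peripheral spectral points of $T$ are isolated poles and hence eigenvalues.

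Item 4 is the step I expect to be the main obstacle. The rank-one form $P=w\otimes\Phi_N$ itself is direct: it is the residue of $(\mu I-S)^{-1}$ at $\mu=\rho(S)$ from Proposition~\ref{prop:uniqueness}, with $\Phi_N:=\Phi R^{S}_{\rho(S)}/D_S'(\rho(S))$ as in the Remark after Lemma~\ref{lem:spectral_radius}. This $\Phi_N$ is a left eigenfunctional and is strictly positive, since it dominates a positive multiple of $\Phi$ via the Neumann series. The delicate part is identifying $P$ with $\sum_{\zeta^N=1}P_{\rho(T)\zeta}$, where $P_{\rho(T)\zeta}$ denotes the Riesz projection of $T$ at $\rho(T)\zeta$, with zero contribution when that point is not in $\sigma(T)$. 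I would prove this with the Riesz--Dunford calculus. Choose a small circle $\Gamma$ around $\rho(S)$ that separates it from the rest of $\sigma(S)$. Then $P=\frac{1}{2\pi i}\oint_\Gamma(\mu-S)^{-1}d\mu$ equals $\chi(T)$, where $\chi=\mathbbm{1}_U\circ(z\mapsto z^N)$ and $U$ is the disc bounded by $\Gamma$. The function $\chi$ is locally constant on a neighbourhood of $\sigma(T)$, and its preimage splits into small neighbourhoods of the points $\rho(T)\zeta$. Multiplicativity of the holomorphic calculus then gives $\chi(T)=\sum_\zeta P_{\rho(T)\zeta}$. Finally, since $P$ has rank one and contains $w$, which is the eigenvector for $\rho(T)$, all the other summands must vanish. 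Hence the peripheral spectrum is in fact just $\{\rho(T)\}$ whenever $P$ is rank one, and I would add this as a consistency check.
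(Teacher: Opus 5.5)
Your proposal is correct. For items 1--3 it is the paper's proof:
\begin{itemize}
\item reduce to $S=T^N$ and use $\rho(T^N)=\rho(T)^N$;
\item get $\rho(T)$ as an eigenvalue from $S(Tw)=T(Sw)$ and the one-dimensionality of $\ker(S-\rho(S))$;
\item get $\lambda^N=\rho(T)^N$ from $T^Nv=\lambda^Nv$.
\end{itemize}
Both you and the paper take the one-step results of Sections 2--4, including quasi-compactness and $\rho(R)<\rho(T)$, as black boxes for $T^N$.

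The genuine difference is in item 4 and in the non-point part of item 3. The paper treats only peripheral \emph{eigenvalues}. It justifies the ``moreover'' clause of item 4 by noting that $T$ restricted to the one-dimensional eigenspace of $T^N$ is diagonalizable. That describes $T$ only on the range of $P$, and does not by itself identify $P$ with a sum of Riesz projections of $T$.

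Your use of the composition rule $g(T^N)=(g\circ z^N)(T)$ in the Riesz--Dunford calculus does prove the operator identity $P=\sum_{\zeta^N=1}P_{\rho(T)\zeta}$. In addition, $r_{\rm ess}(T)=r_{\rm ess}(T^N)^{1/N}<\rho(T)$ extends item 3 from eigenvalues to the whole peripheral spectrum.

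Your rank count is also right, and it gives more than the statement: the peripheral spectrum is exactly $\{\rho(T)\}$. There is an even more direct route: a peripheral eigenvector lies in $\ker(T^N-\rho(T)^N)=\mathrm{span}\{w\}$, and $Tw=\rho(T)w$ forces $\lambda=\rho(T)$. This is implicit in the paper's last sentence but never stated. The paper's route is shorter; yours actually establishes item 4.

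One small normalization point. Your $\Phi_N=\Phi R^{S}_{\rho(S)}/D_S'(\rho(S))$ satisfies $\Phi_N[w]=1$ for $w=\alpha R^{S}_{\rho(S)}u_0$, because $D_S'(\rho(S))=\alpha\Phi[(R^{S}_{\rho(S)})^2u_0]$. It does not satisfy this for the Remark's $w_0=\alpha R_{\lambda_0}u_0/D'(\lambda_0)$. In general you can simply rescale, since any rank-one projection with range $\mathrm{span}\{w\}$ has the form $w\otimes\psi$ with $\psi[w]=1$.
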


\begin{proof}
    By Assumption~\ref{assump:power-Doeblin}, the operator $T^N$ satisfies the Doeblin minorization \eqref{eq:power-Doeblin} with respect to the vector $u_0$ and the functional $\Phi$. Hence all results from Sections~\ref{s:structure}--\ref{s:spectral_dominance} apply to $T^N$ in place of $T$. In particular, the argument in Section~\ref{s:spectral_dominance} yields:
    \begin{itemize}
        \item $\rho(T^N)>0$ is the unique dominant eigenvalue of $T^N$;
        \item $\rho(T^N)$ is algebraically simple;
        \item the eigenspace of $\rho(T^N)$ is one-dimensional and generated by a strictly positive vector $w$;
        \item the corresponding spectral projection is rank-one and equals $w \otimes \Phi_N$ for some strictly positive eigenfunctional $\Phi_N$ with $\Phi_N[w] = 1$.
    \end{itemize}
    Since $\rho(T^N) = \rho(T)^N$ for a bounded operator, this proves (2) and (4). The existence of a positive eigenvalue $\rho(T)$ for $T$ itself follows by applying $T$ to $w$ and using the positivity of $T$ together with the one-dimensionality of the eigenspace of $T^N$.

    \smallskip
    
    If $\lambda$ is an eigenvalue of $T$ with eigenvector $v\neq 0$, then
    \begin{equation}
        T v = \lambda v \quad \Longrightarrow \quad T^N v = \lambda^N v.
    \end{equation}
    Thus $\lambda^N$ is an eigenvalue of $T^N$. If $|\lambda| = \rho(T)$, then $|\lambda^N| = \rho(T)^N = \rho(T^N)$, so by the spectral dominance of $T^N$ we must have $\lambda^N = \rho(T^N) = \rho(T)^N$. This proves (3).

    \smallskip
    
    Finally, since the eigenspace associated with $\rho(T^N)$ is one-dimensional and invariant under $T$, the restriction of $T$ to this eigenspace is diagonalizable with eigenvalues given by the peripheral eigenvalues of $T$.
\end{proof}

%%%%%
\subsection{An example: power-Doeblin without Doeblin for \texorpdfstring{$T$}{}}

We give a simple finite-dimensional example where the operator $T$ itself does not satisfy a Doeblin-type rank-one minorization, while a power $T^N$ does. We consider the $2 \times 2$ stochastic matrix
\begin{equation}
    P =
    \begin{pmatrix}
        0   & 1 \\
        \frac{1}{2} & \frac{1}{2}
    \end{pmatrix},
\end{equation}
viewed as a positive operator on $\mathbb{R}^2$ with the standard order.

This example shows that even when $P$ does not admit a rank-one Doeblin lower bound (because of zero entries), a power $P^2$ can satisfy such a bound once the chain has had enough time to ``mix'' across states. In the language of Section~\ref{ss:power-doeblin}.2, $P$ satisfies the power-Doeblin condition with $N = 2$. Applying the argument in Section~\ref{s:spectral_dominance} to $T = P^2$ yields:
\begin{itemize}
    \item a unique dominant eigenvalue $\rho(P^2) = \rho(P)^2$ which is real, positive and simple;
    \item a strictly positive eigenvector $w > 0$ for $P^2$;
    \item a rank-one spectral projection for $P^2$ given by $w \otimes \Phi_2$.
\end{itemize}
Since $P$ and $P^2$ share the same spectral radius, this information translates into detailed spectral information about $P$ itself, in the spirit of the Perron--Frobenius theorem for primitive matrices.

\begin{Rem}[Polynomial averaging vs. Perron--Frobenius]
\rm A polynomial $p(T)$ may satisfy a Doeblin-type minorization even when neither $T$ nor any power $T^k$ does. This averaged positivity collapses the peripheral spectrum of $T$ into a single dominant eigenvalue of $p(T)$. However, it does not restore the Perron--Frobenius simplicity for $T$ itself, nor the convergence of $T^n$. This observation highlights that power-Doeblin conditions are essentially optimal for recovering the strong Perron--Frobenius structure of $T$.
\end{Rem}

%%%%%%%%%%
\section{A kernel-space rank-one subtraction} \label{s:kernel-rankone}

This appendix explains how the point-evaluation subtraction recursion
\begin{equation} \label{eq:target-rec}
    \begin{aligned}
        &\Gamma_{0}(x,y) = K(x,y),\\
        &\Gamma_{n+1}(x,y) = \int_{\Omega} K(x,\xi) \Gamma_n(\xi,y) \, d\mu(\xi) - K(x,y) \, \Gamma_n(x_0,y_0), \quad n \geq 0,
    \end{aligned}
\end{equation}
arises naturally as a rank-one decomposition on a kernel space. This point of view clarifies why the subtraction term involves $K(x,y)$, and how it relates to the $L^p$-operator-theoretic framework used in the main text. As in \cite{CKO2025}, this recursion plays an important role to analyze integral projection models (IPM), especially to construct the solution of nonnegative Fredholm integral equation without Fredholm determinant. The recursion~\eqref{eq:target-rec} is not merely a heuristic calculation; it has a mathematical background involving the lift of operators from the Banach function space $E$ to the Bochner space. 

%%%%%
\subsection{Kernel space and the lifted operator} \label{ss:lifted}
Let $(\Omega,\mathcal{F},\mu)$ be a $\sigma$-finite measure space and let $E$ be a Banach function space on $(\Omega,\mu)$ (e.g. $E = L^p(\Omega)$). We consider the kernel space
\begin{equation} \label{eq:kernel-space}
    \mathcal{E} := L^\infty(\Omega;E) = \Bigl\{ F \colon \Omega \times \Omega \to \mathbb{C} \, \text{ measurable} \mid \, \|F\|_{\mathcal{E}} := \text{ess sup}_{y\in\Omega} \|F(\cdot,y)\|_{E} < \infty \Bigr\}.
\end{equation}

\begin{Rem}[No order-continuity]
    \rm Since $\mathcal{E}$ contains $L^\infty$, it cannot have order-continuous norm. Even in the case of intermittent maps breaking Doeblin condition, we can keep the rank-one structure as below.
\end{Rem}

\medskip

Let $K \colon \Omega \times \Omega \to [0,\infty)$ be a measurable kernel such that the integral operator
\begin{equation}
    (Tf)(x) := \int_\Omega K(x,\xi) \, f(\xi) \; d\mu(\xi)
\end{equation}
defines a bounded positive operator on $E$. Define the lifted operator $\mathcal{T} \colon \mathcal{E} \to \mathcal{E}$ by acting with $T$ in the first variable:
\begin{equation} \label{eq:app-lifted-op}
    (\mathcal T F)(x,y) := \int_\Omega K(x,\xi) \, F(\xi,y) \; d\mu(\xi), \qquad F \in \mathcal{E}.
\end{equation}
Since For a.e.~$y$, we have $(\mathcal{T} F)(\cdot,y) = T(F(\cdot,y))$ in $E$; hence $\|(\mathcal{T} F)(\cdot,y)\|_E \leq \|T\| \, \|F(\cdot,y)\|_E$ and taking $\text{ess sup}_y$ yields if $T \colon E \to E$ is bounded, then $\mathcal{T} \colon \mathcal{E} \to \mathcal{E}$ is bounded and $\|\mathcal{T}\|_{\mathcal{E}\to\mathcal{E}} \leq \|T\|_{E\to E}$. Moreover, if $T$ is positive, then $\mathcal{T}$ is also positive.

\begin{Rem}[Global vs local]
    \rm Consider the dynamics $\mathcal{T}^n$. If we have order-continuity, then we have $L^p$-evaluation globally. Since around the reference point $(x_0,y_0)$ the local time diverges, the compactness of the operator is lost. Thus we cannot have $L^p$-evaluation and control the singularity by uniform $L^\infty$-evaluation with respect to $y$. 
\end{Rem}

%%%%%
\subsection{The mollified kernel recursion}

Let $(\psi_\varepsilon)_{\varepsilon>0}$ and $(\eta_\delta)_{\delta>0}$ be standard approximate identities centered at $x_0$ and $y_0$, respectively:
\begin{equation}
    \psi_\varepsilon \geq 0, \quad \int\psi_\varepsilon = 1, \quad \text{supp}(\psi_\varepsilon) \subset B_\varepsilon(x_0), \qquad \eta_\delta \geq 0, \quad \int\eta_\delta = 1, \quad \text{supp}(\eta_\delta) \subset B_\delta(y_0).
\end{equation}
Define the continuous positive functional $\Phi_{\varepsilon,\delta} \colon \mathcal{E} \to \mathbb{R}$ by
\begin{equation} \label{eq:app-mollified-eval}
    \Phi_{\varepsilon,\delta}[F] := \int_\Omega \int_\Omega F(x,y) \, \psi_\varepsilon(x) \, \eta_\delta(y) \; d\mu(x) \, d\mu(y).
\end{equation}
Then $\Phi_{\varepsilon,\delta} \in (\mathcal{E})'_+$ whenever the map $y \mapsto \int |F(x,y)| \psi_\varepsilon(x) \, d\mu(x)$ is essentially bounded, which holds for instance when $E = L^p$ and $\psi_\varepsilon \in L^q$ with $1/p + 1/q = 1$. By H\"older's inequality, $\Phi_{\varepsilon,\delta}$ is continuous on $\mathcal{E}$: for $F \in \mathcal{E} = L^\infty(\Omega; L^p(\Omega))$,
\begin{equation}
    \Big| \Phi_{\varepsilon,\delta}[F] \Big| \leq \|\psi_\varepsilon\|_{L^{p'}} \, \|\eta_\delta\|_{L^1} \, \sup_{y\in\Omega} \|F(\cdot,y)\|_{L^p},
\end{equation}
and hence $\Phi_{\varepsilon,\delta} \in \mathcal E'_+$.
Next, view the kernel $K$ itself as an element of $\mathcal{E}$:
\begin{equation} \label{eq:app-K-in-kernel-space}
    K \in \mathcal{E}, \qquad \text{i.e.} \quad \text{ess sup}_{y} \|K(\cdot,y)\|_E < \infty,
\end{equation}
which is satisfied in typical $L^1$--$L^\infty$ Schur settings. Define the rank-one operator $\mathcal{P}_{\varepsilon,\delta} \colon \mathcal{E} \to \mathcal{E}$ by
\begin{equation} \label{eq:app-rankone-P}
    \mathcal{P}_{\varepsilon,\delta} := K \otimes \Phi_{\varepsilon,\delta}, \qquad (\mathcal{P}_{\varepsilon,\delta} F)(x,y) := K(x,y) \, \Phi_{\varepsilon,\delta}[F].
\end{equation}
Clearly $\mathcal{P}_{\varepsilon,\delta}$ is positive and
\begin{equation}
    R(\mathcal{P}_{\varepsilon,\delta}) = \text{span} \{K\} \subset \mathcal{E},
\end{equation}
hence $\mathcal{P}_{\varepsilon,\delta}$ is rank-one on $\mathcal{E}$. We remark here that since $\|\mathcal{P}_{\varepsilon,\delta}\| \leq \|K\|_\mathcal{E} \|\Phi_{\varepsilon,\delta}\| \leq \|K\|_\mathcal{E}$, $\|\mathcal{P}_{\varepsilon,\delta}\|$ is also bounded. Define the rank-one subtracted operator
\begin{equation} \label{eq:app-S-def}
    \mathcal{S}_{\varepsilon,\delta} := \mathcal T - \mathcal{P}_{\varepsilon,\delta}.
\end{equation}
Starting from $\Gamma_0 := K \in \mathcal{E}$, define
\begin{equation} \label{eq:app-Gamma-def}
    \Gamma_0^{\varepsilon,\delta} := K, \qquad \Gamma_{n+1}^{\varepsilon,\delta} := \mathcal S_{\varepsilon,\delta} \Gamma_n^{\varepsilon,\delta}.
\end{equation}
Therefore, for every $n \geq 0$ and a.e.~$(x,y) \in \Omega \times \Omega$, we have the kernel recursion:
\begin{equation} \label{eq:app-recursion}
    \Gamma_{n+1}^{\varepsilon,\delta}(x,y) = \int_\Omega K(x,\xi) \, \Gamma_n^{\varepsilon,\delta}(\xi,y) \; d\mu(\xi) - K(x,y) \, \Phi_{\varepsilon,\delta}[\Gamma_n^{\varepsilon,\delta}].
\end{equation}
To obtain the desired point-evaluation coefficient $\Gamma_n(x_0,y_0)$, we take $(\varepsilon,\delta) \downarrow (0,0)$ in \eqref{eq:app-recursion}. This requires a mild regularity assumption ensuring that mollified evaluation converges to point evaluation. Assuming that $\Gamma_n^{\varepsilon,\delta} \to \Gamma_n$ in a mode compatible with mollified evaluation (e.g. pointwise a.e. with uniform local bounds, or convergence in $L^1_{\text{loc}}$), we may pass to the limit in \eqref{eq:app-recursion} to obtain exactly \eqref{eq:target-rec}. In particular, the coefficient in the subtraction term becomes the scalar $\Gamma_n(x_0,y_0)$, and the direction remains $K(x,y)$.

\begin{Rem}[Convergence of $\Phi_{\varepsilon,\delta}$ to $\delta_{(x_0,y_0)}$]
    \rm Since
    \begin{equation}
        \int_\Omega \|F(\cdot,y)\|_E \, \eta_\delta(y) \; dy \leq \|F\|_\mathcal{E} \int_\Omega \eta_\delta(y) \; dy = \|F\|_\mathcal{E},
    \end{equation}
    the mollified functional $\Phi_{\varepsilon,\delta}$ is bounded. Therefore, $\Phi_{\varepsilon,\delta}$ is continuous functional and $\|\Phi_{\varepsilon,\delta}\| \leq 1$. Moreover, if $(x_0,y_0)$ is a Lebesgue point, then
    \begin{equation}
        \Phi_{\varepsilon,\delta}[F] \xrightarrow[\varepsilon, \delta\to0]{} F(x_0,y_0).
    \end{equation}
    The point evaluation $\delta_{(x_0,y_0)}$ is not bounded on general $L^p$, but by interpreting it as a weak convergence of our constructed $\Phi_{\varepsilon,\delta}$, we can discuss the validity of recursion even for singular kernels.
\end{Rem}

%%%%%%%%%%
\section{Numerical Results and Validation} \label{s:numerical}

To demonstrate the validity and robustness of the spectral construction method based on Bell polynomials, this chapter presents numerical simulations using a specific integral kernel. We focus on the logarithmic kernel, defined as:
\begin{equation}
    R(x,y) = \frac{1}{(x+y+2) [\log (x+y+2)]^p}
\end{equation}
For $p > 1$, the operator $R$ induced from the kernel $R(x,y)$ is in a trace class. The choice of the kernel for $p = 1$ is critical and particularly significant because, in the context of classical Fredholm theory, it often fails to be of trace class on infinite domains, making the traditional definition of the Fredholm determinant $D(\lambda)$ problematic. To be more precise, we introduce
\begin{equation}
    u_0(x) = \mathbbm{1}_{[0,1]}(x), \quad \Phi(y) = \mathbbm{1}_{[0,1]}(y), \quad R(x,y) = \frac{1}{(x+y+2) \log (x+y+2)}
\end{equation}
and consider the space $L^1(0,\infty)$. We set $P = u_0 \otimes \Phi$. Thus, since
\begin{equation}
    \Phi[u_0] = \int_0^1 \mathbbm{1}_{[0,1]}(x) \; dx = 1,
\end{equation}
$P^2 = P$, i.e., $P$ is a projection. To define the determinant $\det (I - \frac{R}{\lambda})$, we need the compactness or trace-class property for $R$. However, it is not. Indeed, the trace of $R$ is
\begin{equation}
    \text{Tr}(R) = \int_0^\infty R(x,x) \; dx = \int_0^\infty \frac{1}{2(x+1) \log (2x+2)} = \int_{\log 2}^\infty \frac{1}{2u} \; du = \infty
\end{equation}
We cannot start to compute the characteristic equation. Moreover, $R$ is not compact, i.e., the spectral may be obscured by the essential spectrum (continuum).

%%%%%
\subsection{Renewal structures}
The example shows that the operator $T = P + R$ is positive and satisfies the Doeblin minorization condition. Through our argument, the recursion $\Gamma_{n+1} = R \, \Gamma_n, \; \Gamma_0 = u_0$ yields two renewals: $\Gamma_n$ itself shows the vector-valued renewal of status and $b_n = \Phi[\Gamma_n]$ gives the scalar-valued renewal of observations. We can compute each $\Gamma_1$ and $b_1$ as
\begin{equation}
    \Gamma_1 = R \, u_0 = \int_0^1 \frac{1}{(x+y+2) \log (x+y+2)} \; dx
\end{equation}
and
\begin{equation}
    b_1 = \Phi[R \, u_0] = \int_0^1 \int_0^1 \frac{1}{(x+y+2) \log (x+y+2)} \; dxdy
\end{equation}
Since the denominator is continuous function on $[0,1]\times[0,1]$, both are finite positive values. Thus $\Gamma_n$ and $b_n$ are also finite positive. Then we can define the scalar function $D(\lambda)$ and $w(\lambda)$. Since the (dominant) eigenvalue $\lambda_0$ can be characterized as the solution of $D(\lambda_0) = 0$, we can show the expression for eigenfunction by $w(\lambda_0)$. We can also obtain the explicit relation between $\Gamma_n$ and $b_n$
\begin{equation} \label{eq:bell}
    \Gamma_n = K^{(n)} - \sum_{\ell=0}^{n-1} (-1)^\ell \sum_{k=0}^{n-\ell-1} K^{(n-k-\ell-1)} \; B_{\ell+1,k+\ell+1}(b_1,b_2,\cdots),
\end{equation}
where $B_{p,q}(b_1,b_2,\cdots)$ are the partial Bell polynomials with parameter $p,q$. This shows the feedback from the observations $b_n$ to the status $\Gamma_n$, which forms the algebraic structure of eigen-system.

The dominant eigenvalue can be also characterized by the limit of ratio
\begin{equation}
    \lim_{n\to\infty} \frac{b_{n+1}}{b_n} = \lim_{n\to\infty} \frac{\Phi[R^{n+1} \, u_0]}{\Phi[R^n \, u_0]}
\end{equation}
Indeed, if we assume that $\lambda_0$ is the simple, isolated, dominant eigenvalue, $w$, $\psi$ are the right and left eigenfunctions (Perron--Frobenius or Krein--Rutman theorems can guarantee the existence and uniqueness), then, by spectral decomposition, we have
\begin{equation}
    R = \lambda_0 \underbrace{\frac{w \otimes \psi}{\langle \psi, w \rangle}}_{P_0} + R_{\text{rem}},
\end{equation}
where $\rho(R_{\text{rem}}) < \lambda_0$, $R_{\text{rem}} w = 0$ and $\psi R_{\text{rem}} = 0$.
\begin{equation}
    \Gamma_n = (\lambda_0^n P_0 + R_{\text{rem}}^n) \, u_0 = \underbrace{\frac{\langle \psi, u_0 \rangle}{\langle \psi, w \rangle}}_{c} \lambda_0^n w + R_{\text{rem}}^n u_0 = \lambda_0^n \bigl( c \, w + \varepsilon_n \bigr)
\end{equation}
Therefore,
\begin{equation}
    \begin{aligned}
        \frac{b_{n+1}}{b_n} = \frac{\lambda_0^{n+1} \bigl( c \, \Phi[w] + \Phi[\varepsilon_{n+1}]\bigr)}{\lambda_0^n \bigl( c \, \Phi[w] + \Phi[\varepsilon_n]\bigr)} = \lambda_0 \, \dfrac{1 + \frac{\Phi[\varepsilon_{n+1}]}{c \, \Phi[w]}}{1 + \frac{\Phi[\varepsilon_n]}{c \, \Phi[w]}} \xrightarrow[n\to\infty]{} \lambda_0
    \end{aligned}
\end{equation}
The limit of ratio provides the explicit computation formula of the eigenvalue. We have two contrasting formulae for the eigenvalue:
\begin{equation}
    \begin{aligned}
        &\text{characteristic equation (analytic, static)} \quad D(\lambda_0) = 0,\\
        &\text{algorithm (algebraic, dynamic)} \qquad \lim_{n\to\infty} \frac{b_{n+1}}{b_n} = \lambda_0.
    \end{aligned}
\end{equation}

\begin{Rem}
    \rm For $T = P + R$, the Fredholm theory considers the sum of all eigenvalues for $R$ but our approach induces dynamics through $P$, which is an interesting contrast between global and local. For the existing theory, it takes advantage of proving the existence of the eigenvalue. However, it fails to provide an explicit representation of the eigenfunction as a series. On the contrary, our approach provides the method to construct the eigenfunction through connecting observation $b_n$ and evolution $\Gamma_n$ by \eqref{eq:bell}.
\end{Rem}

%%%%%
\subsection{Numerical results}
By discretizing the operator into an $N \times N$ matrix and observing the evolution of the coefficients $b_n$, we provide empirical evidence that our local, constructive approach circumvents the global convergence issues typically associated with non-compact operators. The following sections detail the convergence characteristics and the resulting spectral profiles.

\begin{figure}[!ht]
  \centering
  \includegraphics[width=0.8\linewidth]{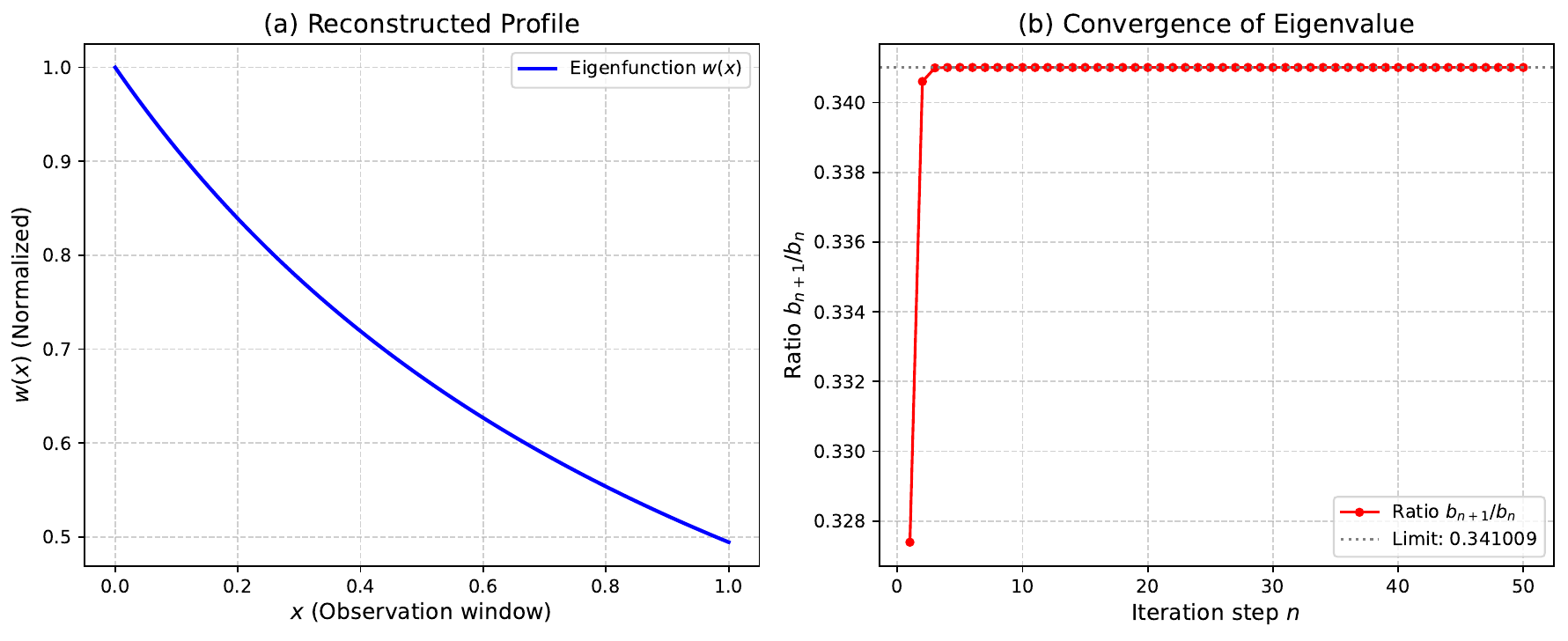}
  \caption{(Left) The dominant eigenfunction $w(x)$ obtained through the proposed method. Despite the non-trace-class nature of the operator, the iterative process converges to a stable, smooth, and positive profile within the observation window $\Phi$. (Right) The convergence of the ratio $\frac{b_{n+1}}{b_n}$}
  \label{fig:eigenfunction}
\end{figure}
\if0
\lstset{
    language=Python,
    basicstyle=\ttfamily\small,
    keywordstyle=\color{blue},
    commentstyle=\color{gray},
    breaklines=true,
    frame=single
}

\begin{lstlisting}[caption=Python implementation for eigenmode reconstruction]
import numpy as np
import matplotlib.pyplot as plt

N = 100
x = np.linspace(0, 1, N)
dx = x[1] - x[0]

M = np.array([[(1.0 / ( (xi + xj + 2) * np.log(xi + xj + 2) )) * dx for xj in x] for xi in x])

u = np.ones(N)
ratios = []

for n in range(50):
    u_next = M @ u
    ratios.append(np.sum(u_next) / np.sum(u))
    u = u_next

u_final = u / np.max(u)
lambda_0 = ratios[-1]

fig, (ax1, ax2) = plt.subplots(1, 2, figsize=(12, 5))

ax1.plot(x, u_final, label='Eigenfunction $w(x)$', color='blue', linewidth=2)
ax1.set_xlabel('$x$ (Observation window)', fontsize=12)
ax1.set_ylabel('$w(x)$ (Normalized)', fontsize=12)
ax1.set_title('(a) Reconstructed Profile', fontsize=14)
ax1.grid(True, linestyle='--', alpha=0.6)
ax1.legend(loc='best')

ax2.plot(range(1, len(ratios)+1), ratios, 'o-', color='red', markersize=4, label='Ratio $b_{n+1}/b_n$')
ax2.axhline(y=lambda_0, color='gray', linestyle=':', label=f'Limit: {lambda_0:.6f}')
ax2.set_xlabel('Iteration step $n$', fontsize=12)
ax2.set_ylabel('Ratio $b_{n+1}/b_n$', fontsize=12)
ax2.set_title('(b) Convergence of Eigenvalue', fontsize=14)
ax2.grid(True, linestyle='--', alpha=0.6)
ax2.legend(loc='best')

plt.tight_layout()

plt.savefig("combined_results.pdf", bbox_inches='tight')
plt.show()
\end{lstlisting}
\fi
%%%%%%%%%%
\section*{Acknowledgment}
The authors would like to dedicate this work to the late Professor Nobuhiko Fujii, and to express our heartfelt gratitude for the many fruitful discussion. The authors also thank to Hisashi Inaba for insightful discussions and valuable advice.

%% Loading bibliography style file
%\bibliographystyle{model1-num-names}
%\bibliographystyle{cas-model2-names}

%% bibliography
\printbibliography
%%%%%%%%%%
\end{document}